\begin{document}

\def\Z{\mathbb{Z}}                   
\def\Q{\mathbb{Q}}                   
\def\C{\mathbb{C}}                   
\def\N{\mathbb{N}}                   
\def\uhp{{\mathbb H}}                
\def\A{\mathbb{A}}                   
\def\dR{{\rm dR}}                    
\def\F{{\cal F}}                     
\def\Sp{{\rm Sp}}                    
\def\Gm{\mathbb{G}_m}                 
\def\Ga{\mathbb{G}_a}                 
\def\Tr{{\rm Tr}}                      
\def\tr{{{\mathsf t}{\mathsf r}}}                 
\def\spec{{\rm Spec}}            
\def\ker{{\rm ker}}              
\def\GL{{\rm GL}}                
\def\k{{\sf k}}                     
\def\ring{{ R}}                   
\def\X{{\sf X}}                      
\def\T{{\sf T}}                      
\def\Ts{{\sf S}}
\def\cmv{{\sf M}}                    
\def\BG{{\sf G}}                       
\def\podu{{\sf pd}}                   
\def\ped{{\sf U}}                    
\def\per{{\sf  P}}                   
\def\gm{{\sf  A}}                    
\def\gma{{\sf  B}}                   
\def\ben{{\sf b}}                    

\def\Rav{{\mathfrak M }}                     
\def\Ram{{\mathfrak C}}                     
\def\Rap{{\mathfrak G}}                     

\def\nov{{\sf  n}}                    
\def\mov{{\sf  m}}                    
\def\Yuk{{\sf Y}}                     
\def\Ra{{\sf R}}                      
\def\hn{{ h}}                      
\def\cpe{{\sf C}}                     
\def\g{{\sf g}}                       
\def\t{{\sf t}}                       
\def\pedo{{\sf  \Pi}}                  

\def\Der{{\rm Der}}                   
\def\MMF{{\sf MF}}                    
\def\codim{{\rm codim}}                
\def\dim{{\rm    dim}}                
\def\Lie{{\rm Lie}}                   
\def\gg{{\mathfrak g}}                

\def\u{{\sf u}}                       

\def\imh{{  \Psi}}                 
\def\imc{{  \Phi }}                  
\def\stab{{\rm Stab }}               
\def\Vec{{\rm Vec}}                 
\def\prim{{\rm prim}}                  

\def\Fg{{\sf F}}     
\def\hol{{\rm hol}}  
\def\non{{\rm non}}  
\def\alg{{\rm alg}}  

\def\bcov{{\rm \O_\T}}       

\def\leaves{{\cal L}}        

\def\GM{{\rm GM}}

\def\perr{{\sf q}}        
\def\perdo{{\cal K}}   
\def\sfl{{\mathrm F}} 
\def\sp{{\mathbb S}}  

\newcommand\diff[1]{\frac{d #1}{dz}} 
\def\End{{\rm End}}              

\def\sing{{\rm Sing}}            
\def\cha{{\rm char}}             
\def\Gal{{\rm Gal}}              
\def\jacob{{\rm jacob}}          
\def\tjurina{{\rm tjurina}}      
\newcommand\Pn[1]{\mathbb{P}^{#1}}   
\def\Ff{\mathbb{F}}                  

\def\O{{\cal O}}                     
\def\as{\mathbb{U}}                  
\def\ring{{ R}}                         
\def\R{\mathbb{R}}                   

\newcommand\ep[1]{e^{\frac{2\pi i}{#1}}}
\newcommand\HH[2]{H^{#2}(#1)}        
\def\Mat{{\rm Mat}}              
\newcommand{\mat}[4]{
     \begin{pmatrix}
            #1 & #2 \\
            #3 & #4
       \end{pmatrix}
    }                                
\newcommand{\matt}[2]{
     \begin{pmatrix}                 
            #1   \\
            #2
       \end{pmatrix}
    }
\def\cl{{\rm cl}}                

\def\hc{{\mathsf H}}                 
\def\Hb{{\cal H}}                    
\def\pese{{\sf P}}                  

\def\PP{\tilde{\cal P}}              
\def\K{{\mathbb K}}                  

\def\M{{\cal M}}
\def\RR{{\cal R}}
\newcommand\Hi[1]{\mathbb{P}^{#1}_\infty}
\def\pt{\mathbb{C}[t]}               
\def\W{{\cal W}}                     
\def\gr{{\rm Gr}}                
\def\Im{{\rm Im}}                
\def\Re{{\rm Re}}                
\def\depth{{\rm depth}}
\newcommand\SL[2]{{\rm SL}(#1, #2)}    
\newcommand\PSL[2]{{\rm PSL}(#1, #2)}  
\def\Resi{{\rm Resi}}              

\def\L{{\cal L}}                     
\def\Aut{{\rm Aut}}              
\def\any{R}                          
\newcommand\ovl[1]{\overline{#1}}    

\newcommand\mf[2]{{M}^{#1}_{#2}}     
\newcommand\mfn[2]{{\tilde M}^{#1}_{#2}}     

\newcommand\bn[2]{\binom{#1}{#2}}    
\def\ja{{\rm j}}                 
\def\Sc{\mathsf{S}}                  
\newcommand\es[1]{g_{#1}}            
\newcommand\V{{\mathsf V}}           
\newcommand\WW{{\mathsf W}}          
\newcommand\Ss{{\cal O}}             
\def\rank{{\rm rank}}                
\def\Dif{{\cal D}}                   
\def\gcd{{\rm gcd}}                  
\def\zedi{{\rm ZD}}                  
\def\BM{{\mathsf H}}                 
\def\plf{{\sf pl}}                             
\def\sgn{{\rm sgn}}                      
\def\diag{{\rm diag}}                   
\def\hodge{{\rm Hodge}}
\def\HF{{\sf F}}                                
\def\WF{{\sf W}}                               
\def\HV{{\sf HV}}                                
\def\pol{{\rm pole}}                               
\def\bafi{{\sf r}}
\def\id{{\rm id}}                               
\def\gms{{\sf M}}                           
\def\Iso{{\rm Iso}}                           

\def\hl{{\rm L}}    
\def\imF{{\rm F}}
\def\imG{{\rm G}}
\def\cy{{Calabi-Yau }}
\def\DHR{{\rm DHR }}            
\def\H{{\sf H}}            
\def\P {\mathbb{P}}                  
\def\E{{\cal E}}
\def\L{{ L}}             
\def\CX{{\cal X}}
\def\dt{{\sf d}}             

\def\gmo{{\sf  B}}                    

\newtheorem{theo}{Theorem}[section]
\newtheorem{exam}{Example}[section]
\newtheorem{coro}{Corollary}[section]
\newtheorem{defi}{Definition}[section]
\newtheorem{prob}{Problem}[section]
\newtheorem{lemm}{Lemma}[section]
\newtheorem{prop}{Proposition}[section]
\newtheorem{rem}{Remark}[section]
\newtheorem{obs}{Observation}[section]
\newtheorem{conj}{Conjecture}
\newtheorem{nota}{Notation}[section]
\newtheorem{ass}{Assumption}[section]
\newtheorem{calc}{}
\numberwithin{equation}{section}

\begin{center}
 {\LARGE\bf  Gauss-Manin connection in disguise: Dwork family}
\footnote{ MSC2010:
14J15,     
14J32,     
11Y55.      
\\
Keywords: Gauss-Manin connection, Dwork family, Picard-Fuchs
equation, Hodge filtration, quasi-modular form, q-expansion. }
\\

\vspace{.25in} {\large {\sc H. Movasati \footnote{Instituto de
Matem\'atica Pura e Aplicada (IMPA), Rio de Janeiro, Brazil. email:
hossein@impa.br} and Y. Nikdelan \footnote{Universidade do Estado do
Rio de Janeiro (UERJ), Instituto de Matem\'atica
e Estat\'{i}stica (IME), Departamento de An\'alise Matem\'atica, Rio de Janeiro, Brazil. e-mail: younes.nikdelan@ime.uerj.br}}} \\
\end{center}
\vspace{.25in}
\begin{abstract}
We study the moduli space $\textsf{T}$ of the Calabi-Yau
$n$-folds  arising from the Dwork family and enhanced with bases of the $n$-th de Rham cohomology with constant cup product and compatible with Hodge filtration. We also describe a unique vector
field $\textsf{R}$ in $\textsf{T}$ which contracted with the Gauss-Manin connection gives an upper triangular matrix with some non-constant entries which are natural generalizations of Yukawa couplings.  For $n=1,2$ we
compute explicit expressions of $\textsf{R}$ and give a solution of
$\textsf{R}$ in terms of quasi-modular forms. The moduli space $\T$ is an affine variety and for $n=4$ we  give explicit coordinate system for $\T$ and compute the vector field $\textsf{R}$ and the $q$-expnasion of its solution.
\end{abstract}

\section{Introduction}
\label{section int} The project Gauss-Manin connection in
disguise  started in the articles \cite{ho22, HosseinMurad} and the book  \cite{GMCD-MQCY3} aims to unify  modular and
automorphic forms with topological string partition  functions  of string
theory, see for instance \cite{alim11} and the references therein.
Modular and automorphic forms have a vast amount of  applications in number
theory and so it is highly desirable to seek for such applications
for $q$-expansions of Physics. The main ingredient of this unification is a natural generalization
of Ramanujan relations between the Eisenstein series interpreted as a vector field in a
moduli space of enhanced elliptic curves. This has been extensively used in transcendental number
theory, see \cite{nes01, Zudilin2011} for an overview of some results.
The starting point is either a Picard-Fuchs equation or a family of
algebraic varieties. In \cite{GMCD-MQCY3} the first author has described the construction of
such vector fields attached to Calabi-Yau equations of the list
in \cite{alenstzu}, and in particular the well-known 14 family of Calabi-Yau threefolds in \cite{dormor}. In this article
we are going to consider the family of $n$-dimensional Calabi-Yau
varieties $X=X_\psi,\ \psi\in
\Pn 1-\{0,1,\infty\}$ obtained by a quotient and desingularization
of the so-called Dwork family:
\begin{equation}
\label{12jan2016}
x_0^{n+2}+x_1^{n+2}+\cdots+x_{n+1}^{n+2}-(n+2)\psi x_0x_1\cdots x_{n+1}=0,
\end{equation}
where $x_0,x_1,\cdots, x_{n+1}$ are homogeneous coordinates of $\Pn {n+1}$. From now on we call $X_\psi$ a \emph{mirror (Calabi-Yau)
variety}, as for $n=1,2,3,4$ it is mirror to generic cubic, quartic, quintic and sextic hypersurfaces in $\Pn {n+1}$ and this is fairely explained in the literature, see \cite{dij95} for $n=1$, \cite{Dolgachev96} for $n=2$, \cite{can91} for $n=3$, \cite{KlemmPandharipande2008} for $n=4$ and \cite{GMPR} for a discussion of an arbitrary $n$.
In the present article we
discuss the mentioned  unification in the case of Dwork family, namely we
explain a construction of a modular vector field $\Ra_n=\Ra$
attached to $X_\psi$ such that for $n=1,2$ it has solutions in terms
of (quasi)-modular forms, for $n=3$ the topological partition
functions are rational functions in the coordinates of a solution
of $\Ra$, and for $n\geq 4$ one gets $q$-expansions beyond the
so-far well-known special functions. It is worth pointing out that
we can consider the modular vector field $\Ra$ as an extension of
the systems of differential equations introduced by  G. Darboux
\cite{da78}, G. H. Halphen \cite{ha81} and S. Ramanujan \cite{ra16},
for more details see \cite{ho14},  \cite[\S~1]{nikdelan14}.

For the purpose of Introduction, we need only to know that for
a mirror variety $X$ associated to the Dwork family, $\dim H^n_\dR(X)=n+1$, where $H_\dR^n(X)$ is
the $n$-th algebraic de Rham cohomology of $X$, and its Hodge
numbers $h^{i,j},\ i+j=n,$ are all one. For $n=3$ this is also called
the family of mirror quintic. Let $\T=\T_n$ be the moduli of pairs
$(X,[\alpha_1,\cdots ,\alpha_n,\alpha_{n+1}])$, where
$$
\alpha_i\in F^{n+1-i}\setminus F^{n+2-i},\ \ i=1,\cdots,n,n+1,
$$
$$
[\langle \alpha_i,\alpha_j\rangle]=\imc_n.
$$
Here $F^i$ is the $i$-th piece of the Hodge filtration of
$H^n_\dR(X)$, $\langle\cdot,\cdot \rangle$ is the intersection form
in $H^n_\dR(X)$ and $\imc=\imc_n$ is the explicit constant matrix
given by
\begin{equation}\label{eq phi odd}
\Phi_n:=\left( {\begin{array}{*{20}c}
   {0_{\frac{n+1}{2}} } & {J_{\frac{n+1}{2}}}   \\
   { - J_{\frac{n+1}{2}}}  & {0_{\frac{n+1}{2}}}   \\
\end{array}} \right),
\end{equation}
if $n$ is an odd positive integer, and
\begin{equation}\label{eq phi even}
\Phi_n:=J_{n+1},
\end{equation}
if $n$ is an even positive integer, where by $0_{k}, k\in
\mathbb{N},$ we mean a $k\times k$ block of zeros, and $J_k$ is the
following $k\times k$ block
\begin{equation}\small
J_{k }  := \left( {\begin{array}{*{20}c}
   0 & 0 &  \ldots  & 0 & 1  \\
   0 & 0 &  \ldots  & 1 & 0  \\
    \vdots  &  \vdots  &  {\mathinner{\mkern2mu\raise1pt\hbox{.}\mkern2mu
 \raise4pt\hbox{.}\mkern2mu\raise7pt\hbox{.}\mkern1mu}}  &  \vdots  &  \vdots   \\
   0 & 1 &  \ldots  & 0 & 0  \\
   1 & 0 &  \ldots  & 0 & 0  \\
\end{array}} \right),
\end{equation}
with $J_1=1$.
We construct the universal family $\X\to\T$ together with global
sections $\alpha_i,\ \ i=1,\cdots, n+1$ of the relative algebraic de
Rham cohomology $H^n_\dR(\X/\T)$.
Let
$$
\nabla:H_{\dR}^{n}(\X/\T)\to \Omega_\T^1\otimes_{\O_\T}H_{\dR}^{n}(\X/\T),
$$
be the algebraic Gauss-Manin connection on $H^n_\dR(\X/\T)$. Our main theorem is:
\begin{theo}
 \label{main3}
There exist a unique vector field $\Ra:=\Ra_n$ and regular functions $\Yuk_i, \ i=1,2,\ldots,n-2$ in $\T$
such that the Gauss-Manin connection of the universal family of $n$-fold
mirror variety $\X/\T$ composed with the vector field $\Ra$,
namely $\nabla_{\Ra}$, satisfies:
\begin{equation}
\label{jimbryan}
\nabla_{\Ra}
\begin{pmatrix}
\alpha_1\\
\alpha_2 \\
\alpha_3 \\
\vdots \\
\alpha_n \\
\alpha_{n+1} \\
\end{pmatrix}
= \underbrace {\begin{pmatrix}
0& 1 & 0&0&\cdots &0&0\\
0&0& \Yuk_1&0&\cdots   &0&0\\
0&0&0& \Yuk_2&\cdots   &0&0\\
\vdots&\vdots&\vdots&\vdots&\ddots   &\vdots&\vdots\\
0&0&0&0&\cdots   &\Yuk_{n-2}&0\\
0&0&0&0&\cdots   &0&-1\\
0&0&0&0&\cdots   &0&0\\
\end{pmatrix}}_\Yuk
\begin{pmatrix}
\alpha_1\\
\alpha_2 \\
\alpha_3 \\
\vdots \\
\alpha_n \\
\alpha_{n+1} \\
\end{pmatrix},
\end{equation}
and  $\Yuk\Phi+\Phi \Yuk^\tr=0$. In fact,
\begin{equation}
 \label{thanksdeligne}
 \T:=\spec(\C[t_1,t_2,\ldots, t_{\dt},\frac{1}{t_{n+2}(t_{n+2}-t_1^{n+2})\check t 
 }]),
\end{equation}
where
\begin{equation}
\label{29dec2015} \dt=\dt_n=\left \{
\begin{array}{l}
\frac{(n+1)(n+3)}{4}+1,\,\, \quad  \textrm{\rm if \textit{n} is odd}
\\\\
\frac{n(n+2)}{4}+1,\,\,\,\,\quad\quad \textrm{\rm if \textit{n} is
even}
\end{array} \right. ,
\end{equation}
and $\check t$ is a product of $s$ variables among $t_i$'s,
$i=1,2,\ldots,\dt, \ i\not=1,n+2$ and  $s=\frac{n-1}{2}$ if $n$ is
an odd integer and $s=\frac{n-2}{2}$ if $n$ is an even integer.
\end{theo}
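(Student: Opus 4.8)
The plan is to establish existence and uniqueness of $\Ra$ by working with the Gauss-Manin connection matrix in an explicit basis, and then showing that the required form of $\nabla_{\Ra}$ pins down $\Ra$ uniquely. First I would construct the enhanced moduli space $\T$ explicitly: starting from the Dwork family \eqref{12jan2016}, take the quotient by the relevant group action and a desingularization, so that over the base $\Pn 1 - \{0,1,\infty\}$ we have the family of mirror varieties $X_\psi$. On $H^n_\dR(X_\psi)$ there is a distinguished basis adapted to the Hodge filtration (coming from the residues of forms $\frac{x_0\cdots x_{n+1}}{f^{k}}\Omega$ for $k=1,\dots,n+1$, as in Griffiths' description), and the action of $\GL$ compatible with the Hodge filtration and with the intersection form normalized to $\imc_n$ is what produces the fiber coordinates $t_2,\dots,t_{\dt}$ over the base coordinate $t_1$; the dimension count \eqref{29dec2015} comes from $\dim\GL$-stabilizer considerations for a flag compatible with a symplectic (odd $n$) or symmetric (even $n$) pairing, matching the Hodge numbers all being one. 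The open condition defining the ring \eqref{thanksdeligne} records where the discriminant of the family and the relevant determinants (encoded in $t_{n+2}$, $t_{n+2}-t_1^{n+2}$, and the auxiliary product $\check t$) do not vanish, so that $\X\to\T$ is smooth and the $\alpha_i$ are genuine sections.

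Next I would compute the Gauss-Manin connection in the basis $\alpha_1,\dots,\alpha_{n+1}$. Writing $\nabla \alpha = A\, \alpha$ for a matrix $A$ of $1$-forms on $\T$, the Griffiths transversality property forces $A$ to be block lower-Hessenberg with respect to the Hodge filtration: $\nabla F^i \subset \Omega^1_\T \otimes F^{i-1}$, so in this basis $A$ has zero entries above the first superdiagonal. Compatibility of $\nabla$ with the intersection form gives $A\,\Phi + \Phi\, A^{\tr} = d\Phi = 0$ (since $\Phi$ is constant), which is exactly the algebraic constraint imposed on $\Yuk$ in the statement. The remaining freedom in $A$ is parametrized by finitely many $1$-forms, and I would show — this is where the explicit coordinates on $\T$ earn their keep — that the entries of $A$ can be written down in terms of the $t_i$ and that the $1$-forms $dt_i$ appearing are enough to see that contracting $A$ against a vector field can produce the target matrix $\Yuk$ on the right-hand side of \eqref{jimbryan}.

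Then, for existence and uniqueness of $\Ra$: the equation $\nabla_{\Ra}\alpha = \Yuk\,\alpha$ says that $\iota_{\Ra} A = \Yuk$, i.e. the vector field $\Ra$ must simultaneously make the $(1,2)$-entry equal to $1$, the $(n,n+1)$-entry equal to $-1$, the entries on the superdiagonal equal to the $\Yuk_i$, and \emph{all other entries vanish}. Because $\Ra$ has $\dt$ components and the off-superdiagonal vanishing conditions together with the two normalizations $1$ and $-1$ form a system of exactly $\dt$ independent linear equations on those components (with coefficient matrix invertible precisely after inverting the functions appearing in \eqref{thanksdeligne}), the vector field $\Ra$ exists and is unique, and the $\Yuk_i$ are then \emph{defined} as the resulting superdiagonal entries, automatically regular on $\T$ and automatically satisfying $\Yuk\Phi + \Phi\Yuk^{\tr}=0$ because $A$ does. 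I anticipate the main obstacle to be the bookkeeping showing that the linear system for the components of $\Ra$ is genuinely nondegenerate over the ring \eqref{thanksdeligne} — equivalently, identifying precisely which product $\check t$ of variables must be inverted — since this requires an honest computation of the Gauss-Manin connection matrix in the chosen basis and tracking how the $\GL$-action coordinates enter; the Hodge-theoretic and symplectic/symmetric structural facts are comparatively soft once the explicit family and basis are in place.
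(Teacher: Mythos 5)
Your outline is coherent and one point of it is genuinely cleaner than the paper: since on $\T$ the matrix $\Phi$ is constant, $A\Phi+\Phi A^{\tr}=0$ is immediate, so $\Yuk\Phi+\Phi\Yuk^{\tr}=0$ would come for free once $\nabla_{\Ra}\alpha=\Yuk\alpha$; the paper has to verify this identity by hand (via \eqref{11/1/2016} and Proposition \ref{11-01-2016}) because it constructs $\Ra$ on a larger space where the intersection matrix is not constant. But there is a genuine gap exactly at the step you defer as bookkeeping. First, the naive count of conditions is $\frac{(n+1)(n+2)}{2}+2$, which exceeds $\dt$; you only get down to $\dt$ equations after using $A\Phi+\Phi A^{\tr}=0$ to identify the condition at the $(i,j)$ entry with the one at $(n+2-j,n+2-i)$ (and to check that the two normalizations $1$ and $-1$ are sign-compatible rather than contradictory) — this is not spelled out, though it does work. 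More seriously, the heart of the theorem is precisely the claim you leave unproved: that the resulting $\dt\times\dt$ system is nondegenerate over the ring \eqref{thanksdeligne}, which would also identify $\check t$. Establishing this by ``an honest computation of the Gauss-Manin connection matrix in the chosen basis'' is not available uniformly in $n$: the paper itself has only a recursive algorithm for $\Omega$ and $\tilde\gm$, and its closed formulas (with Stirling numbers) are checked only for $n\le 4$. So as written, existence, uniqueness, regularity of the $\Yuk_i$, and the description of $\T$ all rest on an unverified invertibility claim.

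The paper's proof supplies the structural idea your plan is missing. It enlarges $\T$ to $\tilde\T$, where the entries $s_{ij}$ of the lower-triangular matrix $S$ with $\alpha=S\omega$ are free coordinates subject to the single relation \eqref{eybivafahalemanbenegar}, and uses $\gm=dS\cdot S^{-1}+S\tilde\gm S^{-1}$. The equation $\nabla_{\Ra}\alpha=\Yuk\alpha$ then becomes $\dot S=\Yuk S-S\tilde\gm(\Ra)$, whose lower-triangular entries tautologically \emph{define} the components $\dot s_{ij}$ of $\Ra$, whose $(1,1)$ and $(1,2)$ entries determine $\dot t_1$ and $\dot t_{n+2}$, whose superdiagonal entries define the $\Yuk_i$ by \eqref{11/1/2016}, and whose remaining entries are trivially $0=0$; only the shape of $\tilde\gm$ (coming from $\omega_{i+1}=\frac{\partial}{\partial t_1}\omega_i$ and the order-$(n+1)$ Picard--Fuchs equation), not its explicit entries, is needed. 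The descent to $\T$ is then a tangency statement: $\T$ is the fiber of $(t_1,t_{n+2},S)\mapsto S\Omega S^{\tr}$ over $\Phi$, and the derivative of $S\Omega S^{\tr}$ along $\Ra$ equals $\Yuk\Phi+\Phi\Yuk^{\tr}=0$; uniqueness on $\T$ is handled by a separate integral-curve argument using $\nabla_{\partial/\partial z}\alpha_1\neq 0$. To make your pointwise linear-algebra route work you would need an argument of this structural kind (or some other proof of nondegeneracy valid for all $n$) in place of the explicit computation you invoke.
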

In the proof of Theorem \ref{main3} we will show more than what we
announce  in its statement.  Indeed, we will give the
regular functions $\Yuk_i$'s explicitly, and we will find an
algorithm to express the modular vector field $\Ra$.  An explicit
expression for $\Ra_{3}$ has been given in \cite{ho22, GMCD-MQCY3}
by the first author. In the next theorem we find $\Ra_{1}$ and
$\Ra_{2}$ explicitly and express their solutions in terms of
quasi-modular forms. For a similar computation of $\Ra_1$ and $\Ra_2$ in the context of special geometry see \cite{Murad2014}.
The modular structure for $n=1,2$, that is mirror cubic and quartic, has been discussed respectively in \cite{Ceresole:review} and
\cite{LiYa96}, and also in the GMCD context in \cite{Murad2014}. These references include discussions of the derivation of the modular
group from the monodromies of the periods of the mirror variety.
Modular properties of $\Ra_n$ for $n\geq 3$ is not known and after the present text was ready,  the second author in
\cite{younes2} was able to find $\mathfrak{sl}_2(\C)$
Lie algebras involving the vector field $\Ra_n$.
\begin{theo}
For $n=1,2$ the vector field $\Ra$ as an ordinary differential
equation is respectively given by
\begin{equation}
 \label{lovely-1}
 \Ra_{1}:  \left \{ \begin{array}{l} \dot
t_1=-t_1t_2-9(t_1^3-t_3)
\\ \\
\dot t_2=81t_1(t_1^3-t_3)-t_2^2
\\ \\
\dot t_3=-3t_2t_3
\end{array} \right. ,
\end{equation}
where $\dot \ast=3\cdot q\cdot \frac{\partial \ast}{\partial q}$,
and
\begin{equation}
 \label{lovely-2}
 \Ra_{2}:\left \{ \begin{array}{l}
\dot{t}_1=t_3-t_1t_2\\ \\
\dot{t}_2=2t_1^2-\frac{1}{2}t_2^2\\ \\
\dot{t}_3=-2t_2t_3+8t_1^3\\ \\
\dot{t}_4=-4t_2t_4
\end{array} \right.,
\end{equation}
where $\dot \ast=-\frac{1}{5}\cdot q\cdot \frac{\partial
\ast}{\partial q}$, and the following polynomial equation holds
among $t_i$'s
\begin{equation}\label{eq t3}
 t_3^2=4(t_1^4-t_4).
\end{equation}
\end{theo}
In the above theorem $q$ is a free parameter.  For a complex number $\tau$ with $\rm\, Im \tau>0$, if
we set $q=e^{2\pi i \tau}$, then we find the following solutions of
$\Ra_{1}$ and $\Ra_{2}$ respectively:
\begin{equation} \label{eq solution R01}
\left \{ \begin{array}{l}
{t}_1(q)=\frac{1}{3}(2\theta_3(q^2)\theta_3(q^6)-\theta_3(-q^2)\theta_3(-q^6)),\\ \\
{t}_2(q)=\frac{1}{8}(E_2(q^2)-9E_2(q^6)),\\ \\
{t}_3(q)=\frac{\eta^9({q}^3)}{\eta^3({q})},
\end{array} \right.
\end{equation}
and
\begin{equation} \label{eq solution R02}
\left \{ \begin{array}{l}
\frac{10}{6}{t}_1(\frac{q}{10})=\frac{1}{24}(\theta_3^4(q^2)+\theta_2^4(q^2)),\\ \\
\frac{10}{4}{t}_2(\frac{q}{10})=\frac{1}{24}(E_2(q^2)+2E_2(q^4)),\\ \\
10^4{t}_4(\frac{{q}}{10})=\eta^8({q})\eta^8({q}^2),
\end{array} \right.
\end{equation}
where  $E_2$, $\eta$ and  $\theta_i$'s  are the classical
Eisenstein, eta and theta series, respectively, given as follows:
\begin{align}
&E_2(q)=1-24\sum_{k=1}^\infty \sigma(k) q^{k}\,\, with\,\,
\sigma(k)=\sum_{d\mid k}d,\\
&\eta({q})={q}^{\frac{1}{24}}\prod_{k=1}^\infty (1-{q}^{k}),\\
&\theta_2(q)=\sum_{k=-\infty}^\infty
q^{\frac{1}{2}(\frac{k+1}{2})^2}\,\, , \,\,\,
\theta_3(q)=1+2\sum_{k=1}^\infty q^{\frac{1}{2}k^2}.
\end{align}
We have checked this statement for the first $100$ coefficients of q-expansions,
and the proof  can be done in a similar way as of
Ramanujan's or Darboux's case.
The first 16 coefficients of  \eqref{eq solution R01} and \eqref{eq solution R02} are
listed in  Table \ref{table1}.
We study  the coefficients of q-expansions of the solutions
given in (\ref{eq solution R01}) and (\ref{eq solution R02}), and we
find some interesting enumerative properties. For example, in
(\ref{eq solution R01}) the coefficients of
$t_1(q)=\sum_{k=0}^\infty t_{1,k}q^k$ have the following enumerative
property. Let $k$ be a non-negative integer. If $k=4m,\ \ m\in \N$,
then the equation $x^2+3y^2=k$ has $3t_{1,k}$ integer solutions.
Otherwise the equation has $t_{1,k}$ integer solutions. For more
properties of this type see Section \ref{section ep}.

The article is organized in the following way. In Section
\ref{Dwork-section} we review and summarize some basic facts about the structure of the
Dwork family from which the
mirror variety $X_\psi$ arises. In Section \ref{section ms1} we
introduce the notion of moduli space of holomorphic $n$-form $\Ts$,
and we see that $\Ts$ is two dimensional and present a coordinate system
for it. Section \ref{intersection} deals with the calculation
of intersection form matrix of a given basis of the de Rham
cohomology of mirror variety. In Section \ref{section EMS} we
present the moduli space $\T$ and construct a complete coordinate
system for $\T$. Section \ref{section gmc} is devoted to the
computing of Gauss-Manin connection of the families $\X/\Ts$ and
$\X/\T$. In Section \ref{section main3} Theorem \ref{main3} is
proved and the modular vector field is explicitly computed  for
$n=1,2,3,4$. Finally,  in Section \ref{section ep} after finding the
solutions of $\Ra_{1}$ and $\Ra_{2}$ in terms of quasi-modular
forms, we proceed with the study of enumerative properties of the
$q$-expansions of the solutions.

{\bf Acknowledgment.} The second author thanks the "Instituto de
Matem\'atica da Universidade Federal do Rio de Janeiro" of Brazil,
where he did a part of this work during his Postdoctoral research
with the grant of "CAPES", and in particular he would like to thank
Bruno C. A. Scardua for his support. We are very grateful to the referees
whose critical comments improved the present article.

\section{Dwork Family}\label{Dwork-section}
Let $f_\psi$ be the polynomial in the left hand side of \eqref{12jan2016}.
 Let also $W_\psi$ be the
$n$-dimensional hypersurface in $\P^{n+1}$ given by $f_\psi$. We
know that the first Chern class of $W_\psi$ is zero, from which
follows that $W_\psi$ is a \cy manifold. Thus we have a family of
\cy manifolds given by $\pi:\W \to \P^1$, where $\W\subset
\P^{n+1}\times \C$, $W_\psi=\pi^{-1}(\psi)$ and
\[
W_\infty=\{(x_0,x_1,\ldots,x_{n+1})\mid \,\, x_0x_1\ldots
x_{n+1}=0\}.
\]
This family has been used by B. Dwork  in order to develop  the  deformation
theory of  zeta functions of  nonsingular hypersurfaces in a
projective space, see \cite{dw1,dw2}. One can easily see that the
singular points of this family are $\psi^{n+2}=1,\infty$. Let $G$ be
the following group
\begin{equation}\label{eq group 1}
G:=\{(\zeta_0,\zeta_1,\ldots,\zeta_{n+1})\mid \,\, \zeta_i^{n+2}=1,
\ \zeta_0\zeta_1\ldots \zeta_{n+1}=1 \},
\end{equation}
which acts on $W_\psi$ as follow
\begin{equation} \label{eq group action 1}
(\zeta_0,\zeta_1,\ldots,\zeta_{n+1}).(x_0,x_1,\ldots,x_{n+1})=(\zeta_0x_0,\zeta_1x_1,\ldots,\zeta_{n+1}x_{n+1}).
\end{equation}
We denote by
$Y_\psi:=W_\psi/G$ the quotient space of this action, which is quite
singular. Indeed, $Y_\psi$ is singular in any $x\in W_\psi$ that its
stabilizer in $G$ is nontrivial. For $\psi^{n+2} \neq 1,\infty$
there exist a resolution $X_\psi\to Y_\psi$ of singularities of
$Y_\psi$, such that $X_\psi$ is a \cy $n$-fold with
$\hn^{i,j}(X_\psi)=1,\,\, i+j=n$. Therefore, we have a new family
where the fibers are \cy $n$-folds $X_\psi$ which is the mirror
family of $W_\psi$, see \cite{GMPR}. The de Rham cohomology $H^n_\dR(X_{\psi})$ can be identified in a natural way with  the equivariant cohomology $H^n_\dR(W_{\psi})_G$, and in practice one uses this, and the knowledge of resolution of singularities will not be used throughout the paper.
 The standard variable which
is used in the literature is defined by
$$
z:=\psi^{-(n+2)}.
$$
We have the families $W_z$ given by $f_z=0$, where
\[
f_z(x_0,x_1,\ldots,x_{n+1}):=zx_0^{n+2}+x_1^{n+2}+x_2^{n+2}+\cdots+x_{n+1}^{n+2}-(n+2)x_0
x_1x_2\cdots x_{n+1},
\]
and $X_z$ is defined similar to $X_\psi$.  The new set of singularities is given by $z=0,1 \,\ \textrm{and} \,\
\infty$. From now on we call $X_z$ (or $X_\psi$) the \emph{mirror
variety}. There is a global holomorphic $(n,0)$-form $\eta \in
H^n_\dR(X_z)$ which is given by
\[
\eta:=\frac{dx_1\wedge dx_2\wedge \ldots \wedge dx_{n+1}}{df_z}.
\]
in the affine chart $\{x_0=1\}$. The periods $\int_{\delta}\eta,\ \ \delta\in H_n(X_z,\Z)$ satisfy the well-known Picard-Fuchs equation
\begin{eqnarray}
 & & \L\left(\int_{\delta}\eta\right) =  0 ,\, \,  \hbox{ where}\\
\label{eq pf eta}
& & \L := \vartheta^{n+1}-z(\vartheta+\frac{1}{n+2})(\vartheta+\frac{2}{n+2})\ldots(\vartheta+\frac{n+1}{n+2}), \ \ \ \
\vartheta:=z\frac{\partial}{\partial z}
\end{eqnarray}
Note that if $n=1,2$ or $3$ respectively,  then $X_z$ is
a special family of  elliptic curves, $K3$-surfaces and  mirror quintic $3$-folds, respectively. Note that $X_z$ for $n=1,2$ is not the generic elliptic curve nor
the generic K3 but rather the cubic curve and the quartic K3 surface.

\section{Moduli Space of holomorphic $n$-forms}\label{section ms1}

We denote by  $\Ts$  the moduli of pairs
$(X,\alpha)$, where $X$ is an $n$-dimensional mirror variety and
$\alpha$ is a holomorphic $n$-form on $X$. We know that the family
$X_z$ is a one parameter family and the $n$-form $\alpha$ is unique,
up to multiplication by a constant, therefore $\dim\, \Ts=2$. The multiplicative group
$\Gm:=(\C^*,\cdot)$ acts on $\Ts$ by:
 $$
(X,\alpha)\bullet k=(X,k^{-1}\alpha),\ k\in \Gm,\ (X,\alpha)\in \Ts.
$$
We present a chart $(t_1,t_{n+2})$ for $\Ts$. To do
this, for any $(t_1,t_{n+2})\in \C^2$ we define the following
polynomial
\[
f_{t_1,t_{n+2}}(x_0,x_1,\ldots,x_{n+1}):=t_{n+2}x_0^{n+2}+x_1^{n+2}+x_2^{n+2}+\cdots+x_{n+1}^{n+2}-(n+2)t_1x_0
x_1x_2\cdots x_{n+1}.
\]
Note that $t_{n+2}$ is only multiplied with the monomial $x_0^{n+2}$ and not all the monomials in the expression except the last one. This will be essential in the proof of Proposition \ref{14sept2019}. The choice of the monomial $x_0^{n+2}$ is not relevant. In fact, if we choose to define $f_{t_1,t_{n+2}}$ with $t_{n+2}x_1^{n+2}$ then the two families of varieties are isomorphic under the linear transformation $[x_0:x_1:x_2:\cdots:x_{n+1}] \mapsto
[t_{n+2}^{\frac{1}{n+2}}x_0: t_{n+2}^{-\frac{1}{n+2}}x_1: x_2:\cdots:x_{n+1}]$.
The discriminant of $f_{t_1,t_{n+2}}$ is given by
$\Delta_{t_1,t_{n+2}}=(t_{n+2}-t_1^{n+2})t_{n+2}$. Let
${\textsf{W}}_{t_1,t_{n+2}}$ be the following two parameter family
of \cy manifolds
\[
{\textsf{W}}_{t_1,t_{n+2}}:=\{(x_0,x_1,\ldots,x_{n+1})\, |\,
f_{t_1,t_{n+2}}(x_0,x_1,\ldots,x_{n+1})=0\}\subset \P^{n+1}.
\]
${\sf W}_{t_1,t_{n+2}}$ is singular if and only if
$\Delta_{t_1,t_{n+2}}=0$. For any
$$
(t_1,t_{n+2})\in \C^2\setminus
\{(t_1,t_{n+2})\, | \,\Delta_{t_1,t_{n+2}}=0\}
$$
we let $\X_{t_1,t_{n+2}}$ to be the resolution of the singularities
of ${\textsf{W}}_{t_1,t_{n+2}}/G$ where the group $G$ and the group
action are given by (\ref{eq group 1}) and (\ref{eq group action
1}). Next we fix the $n$-form $\omega_1$ on the family
$\X_{t_1,{t_{n+2}}}$, where $\omega_1$ in the affine space $\{x_0=1\}$
is given by
\[
\omega_1:=\frac{dx_1\wedge dx_2\wedge \ldots \wedge
dx_{n+1}}{df_{t_1,t_{n+2}}}.
\]
Note that for $(t_1,t_{n+2})=(1,z)$ we have
$(\X_{t_1,{t_{n+2}}},\omega_1)=(X_z,\eta)$.
\begin{prop}
	\label{14sept2019}
We have
$$
\Ts=\spec \left( \C\left[t_1,t_{n+2},\frac{1}{(t_1^{n+2}-t_{n+2})t_{n+2}}\right]\right )
$$
and the morphism $\X\to\Ts$ is the the universal family of
$(X,\alpha)$,
where $X$ is an $n$-dimensional mirror variety and
$\alpha$ is a holomorphic $n$-form on $X$. Moreover, the $\Gm$-action on $\Ts$ is given by
\begin{equation}
\label{poloar}
(t_1,t_{n+2})\bullet k=(kt_1,k^{n+2}t_{n+2}),\ (t_1,t_{n+2})\in \Ts,\ k\in \Gm.
\end{equation}
\end{prop}
\begin{proof}
We have a map which sends a point  $(t_1,t_{n+2})\in\Ts$ to the
pair $(\X_{t_1,t_{n+2}},\omega_1)$ in the moduli space $\Ts$ as a
set. Its inverse is given by
$$
(X_{z},a\eta)\mapsto
   (a^{-1},za^{-(n+2)}),\ \ \ a\in\Gm.\nonumber
$$
Note that  $(\X_{t_1,t_{n+2}},\omega_1)$ and $(X_z,t_1^{-1}\eta)$,
where $z=\frac{t_{n+2}}{t_1^{n+2}}$, in the moduli space $\Ts$
represent the same element. The affirmation concerning the
$\Gm$-action follows from the isomorphism:
\begin{align}
 &(\X_{kt_1,k^{n+2}t_{n+2}},\ k\omega_1)\cong (\X_{t_1,t_{n+2}}, \omega_1), \label{20m2014}\\
 &(x_1,x_2,\cdots ,x_{n+1})\mapsto
 (k^{-1}x_1,k^{-1}x_2,\cdots,k^{-1}x_{n+1}),\nonumber
\end{align}
given in the affine coordinates $x_0=1$.
\end{proof}

\section{Intersection form and Gauss-Manin connection}\label{intersection}
Let $X$ be an $n$-dimensional mirror variety and $\xi_1,\xi_2\in
H^n_\dR(X)$. Then in the context of de Rham cohomology, the
\emph{intersection form} of $\xi_1$ and $\xi_2$, denoted by $\langle
\xi_1,\xi_2 \rangle$, is given by
$$
\langle \xi_1,\xi_2 \rangle=\frac{1}{(2\pi i)^n}\int_{X}\xi_1\wedge \xi_2.
$$
We recall that $\langle .,. \rangle$ is a non-degenerate
$(-1)^n$-symmetric form, and
\begin{equation}\label{eq intHodge}
\langle F^i,F^j\rangle=0, \ i+j\geq n+1,
\end{equation}
where
$$
F^\bullet: \{0\}=F^{n+1}\subset F^n\subset \ldots \subset F^1\subset
F^0=H^{n}_\dR(X),\ \ \dim\, F^i=n+1-i\ ,
$$
is the Hodge filtration of $H^{n}_\dR(X)$.

Let
\begin{equation} \label{eq gmn}
 \nabla:H_{\dR}^{n}(\X/\Ts)\to \Omega_\Ts^1\otimes_{\O_\Ts}H_{\dR}^{n}(\X/\Ts)
\end{equation}
be the Gauss-Manin connection of the  two parameter family of varieties
$\X/\Ts$, and $\frac{\partial}{\partial t_1}$ be a vector field on
the moduli space $\Ts$. For simplicity, we use the same notation
$\frac{\partial}{\partial t_1}$, to show
$\nabla_{\frac{\partial}{\partial t_1}}$ which is the composition of
the Gauss-Manin connection $\nabla$ with the vector field
$\frac{\partial}{\partial t_1}$.  Now we define new $n$-forms
$\omega_i,\,\ i=1,2,\ldots, n+1$, as follows
\begin{equation}
\label{29oct11} \omega_i:= {\frac{\partial^{i-1}}{\partial
t_1^{i-1}}}(\omega_1).
\end{equation}
Later, in Proposition \ref{lemm 127} we will see that
$\omega_1,\omega_2,\ldots,\omega_{n+1} $ form a basis of
$H^{n}_\dR(X)$ compatible with its Hodge filtration, i.e.
\begin{equation}
\label{eq. Gr. tr.}\omega_i\in
F^{n+1-i}\setminus F^{n+2-i}, i=1,2,\ldots, n+1.
\end{equation}
We write the Gauss-Manin connection of $\X/\Ts$ in the basis
$\omega$ as follow
\begin{equation} \label{eq Atilde}
\nabla\omega=\gmo \omega,
\end{equation}
and we denote by
\begin{equation}
\label{1mar2016} \Omega=\Omega_n:=\left( \langle
\omega_i,\omega_j\rangle\right)_{1\leq i,j\leq n+1},
\end{equation}
the intersection form matrix in the basis $\omega$. We have
\begin{equation}
\label{14/12/2015} d\Omega=\gmo \Omega+ \Omega \gmo^{\tr}.
\end{equation}
The entries of $\gmo$ and $\Omega$ are  respectively regular
differential 1-forms and functions in $\Ts$. For arbitrary $n$, we
do not  have a general formula for $\Omega$ and $\gmo$.  We
have only an algorithm which computes the entries of $\Omega$ and
$\gmo$ recursively.
For $n=1,2,3,4$ the Picard-Fuchs equation associated with the $n$-form $\omega_1$ is
given by {\small
\begin{align}
\frac{\partial^{n+1}}{{\partial
t_1}^{n+1}}&=-S_2(n+2,n+1)\frac{t_1^{n+1}}{t_1^{n+2}-t_{n+2}}\frac{\partial^{n}}{{\partial
t_1}^{n}}-S_2(n+2,n)\frac{t_1^{n}}{t_1^{n+2}-t_{n+2}}\frac{\partial^{n-1}}{{\partial
t_1}^{n-1}}-\ldots \label{eq pf omega}\\
&-S_2(n+2,2)\frac{t_1^{2}}{t_1^{n+2}-t_{n+2}}\frac{\partial}{{\partial
t_1}}-S_2(n+2,1)\frac{t_1}{t_1^{n+2}-t_{n+2}}\, ,\nonumber
\end{align}
}
where $S_2(r,s), \, r,s\in \N,$ refers to Stirling number of the
second kind which is given by
\begin{equation}
\label{16jan2016}
S_2\left( {r,s} \right){\rm{ }} = {\rm{ }}\frac{1}{{s!}}{\rm{ }}\sum\limits_{i = 0}^s {{{( - 1)}^i}
\left( {\begin{array}{*{20}{c}}
s\\
i
\end{array}} \right)} {\left( {s - i} \right)^r}.
\end{equation}
For details of the computation in the mirror quintic case ($n=3$) see \cite[\S 3.8]{GMCD-MQCY3}.
The equation \eqref{eq pf omega} must be true for arbitrary $n$, however, we are only interested to compute this for explicit $n$'s and so
we do not provide a proof for arbitrary $n$.

\begin{prop} \label{lemm 127}
We have
\begin{description}
  \item[(i)] $\langle \omega_i,\omega_j \rangle =0$, if $i+j\leq n+1$.
  \item[(ii)] $\langle \omega_1,\omega_{n+1} \rangle
  =(-(n+2))^n\frac{c_n}{t_1^{n+2}-t_{n+2}}$, where $c_n$ is a constant.
  \item[(iii)]$\langle \omega_j,\omega_{n+2-j} \rangle =(-1)^{j-1}\langle \omega_1,\omega_{n+1} \rangle$, for $j=1,2,\ldots,n+1$.
  \item[(iv)]
  We can determine all the rest of  $\langle\omega_i,\omega_j\rangle$'s in a unique way.
\end{description}
\end{prop}
\begin{proof}
Note that the intersection form is well-defined for all points in $\Ts$, and so,
$\langle \omega_i,\omega_j\rangle$'s are regular functions in $\Ts$. This implies that  they have
poles only along $t_{n+2}=0$ and $t_{n+2}-t_1^{n+2}=0$.
\begin{description}
  \item[(i)] The Griffiths transversality implies that
$$
\omega_i\in F^{n+1-i}, i=1,2,\ldots, n+1.
$$
This property and the property given in (\ref{eq intHodge}) complete
the proof of {\bf (i)}.
  \item[(ii)] If we present the Picard-Fuchs equation associated with the  holomorphic $n$-form
  $\eta$ as follow:
  \begin{equation}
  \vartheta^{n+1}=a_0(z)+a_1(z)\vartheta+\ldots+a_n(z)\vartheta^n,
  \end{equation}
  then because of (\ref{eq pf eta}) we find
  $$a_n(z)=\frac{n+1}{2}\frac{z}{1-z}\ .$$
  One can verify the differential equation given below
  \[
    \vartheta \langle \eta, \vartheta^n \eta \rangle +
    \frac{2}{n+1}a_n(z)\langle \eta, \vartheta^n \eta \rangle=0,
  \]
  from which we get $\langle \eta, \vartheta^n \eta \rangle= c_n\exp \left( {-\frac{2}{n+1}
\int_0^z a_n(v) \frac{dv}{v}}\right)$, where $c_n$ is a constant.
This yields
\begin{equation}\label{eq at}
\langle \eta, \vartheta^n \eta \rangle=\frac{c_n}{1-z}.
\end{equation}
On the other hand in Section \ref{section ms1} we saw
$z=\frac{t_{n+2}}{t_1^{n+2}}$, which implies
$\vartheta=z\frac{\partial}{\partial
z}=-\frac{1}{n+2}t_1\frac{\partial}{\partial t_1}$. One can easily
see that $\eta=t_1\omega_1$, hence
\begin{align}
\vartheta^n \eta&=(-\frac{1}{n+2}t_1\frac{\partial}{\partial
t_1})^n(t_1\omega_1 )\nonumber\\
&=b_1\omega_1+\ldots+b_n \omega_{n}+(-\frac{1}{n+2})^nt_1^{n+1}\omega_{n+1},
\nonumber
\end{align}
where $b_j$'s are rational functions in $t_1,t_{n+1}$. Therefore, {\bf (i)} implies
\[
\langle \eta, \vartheta^n \eta \rangle=\langle
t_1\omega_1,(-\frac{1}{n+2})^nt_1^{n+1}\omega_{n+1} \rangle\ ,
\]
which completes the proof of {\bf (ii)}.
  \item[(iii)] By {\bf (i)} we have $\langle \omega_{j},\omega_{n+1-j} \rangle=0,\,\, j=1,2,\ldots,n$.
  Thus we get
\begin{align}
\frac{\partial}{\partial t_1} \langle \omega_{j},\omega_{n+1-j}
\rangle&=\langle \frac{\partial}{\partial t_1}
\omega_{j},\omega_{n+1-j} \rangle+\langle
\omega_{j},\frac{\partial}{\partial t_1} \omega_{n+1-j} \rangle
\nonumber\\ &=\langle \omega_{j+1},\omega_{n+1-j} \rangle+\langle
\omega_{j},\omega_{n+2-j} \rangle=0,\nonumber
\end{align}
hence we obtain $\langle \omega_{j+1},\omega_{n+1-j}
\rangle=-\langle \omega_{j},\omega_{n+2-j} \rangle\ , \ \
j=1,2,\ldots,n$, from which follows {\bf (iii)}.
  \item[(iv)] We present the desired algorithm. So far, we have
  computed the first row of the matrix $\Omega$. Suppose that we
  have the $i$-th row of $\Omega$, $1\leq i\leq n$, and then determine $(i+1)$-th row. To
  compute $\langle \omega_{i+1},\omega_j \rangle, \, n+2-i\leq j \leq
  n+1$, we apply $\frac{\partial}{\partial t_1}\langle \omega_{i},\omega_j
  \rangle$, which implies
  \[
    \langle \omega_{i+1},\omega_{j}
  \rangle =\frac{\partial}{\partial t_1}\langle \omega_{i},\omega_j
  \rangle - \langle \omega_{i},\omega_{j+1} \rangle\, .
  \]
  Note that if $j=n+1$, then $\omega_{n+2}=\frac{\partial^{n+1}}{\partial
  t_1^{n+1}}(\omega_1)$ and we compute it by using the Picard-Fuchs
  equation given in \eqref{eq pf omega}.
\end{description}
\end{proof}

The intersection form matrix for $n=1,2,3,4$ are respectively given as
follows:
\begin{equation}\small
\Omega_1=\left({\large
  \begin{array}{cc}
     0 & -\frac{3c_1}{t_1^3-t_3} \\
       \frac{3c_1}{t_1^3-t_3} & 0 \\
  \end{array}}
\right)\,\ , \,\,\,\ \Omega_2=\left({\large
  \begin{array}{ccc}
    0 & 0 & \frac{16c_2}{t_1^4-t_4} \\
    0 & -\frac{16c_2}{t_1^4-t_4} & \frac{32c_2t_1^3}{(t_1^4-t_4)^2}
    \\
    \frac{16c_2}{t_1^4-t_4} & \frac{32c_2t_1^3}{(t_1^4-t_4)^2} & \frac{-16c_2t_1^2(5t_1^4-t_4)}{(t_1^4-t_4)^3} \\
  \end{array}}
\right)\,\ , \nonumber
\end{equation}
\[
 \Omega_3=
\begin{pmatrix}
0             &       0     &      0    &       -\frac{1}{625(t_1^5-t_5)} \\
0             &       0     &    \frac{1}{625(t_1^5-t_5)}  & -\frac{t_1^4}{625(t_1^5-t_5)^2} \\
0             &       -\frac{1}{625(t_1^5-t_5)}&0&           \frac{t_1^3}{625(t_1^5-t_5)^2} \\
\frac{1}{625(t_1^5-t_5)} & \frac{t_1^4}{625(t_1^5-t_5)^2} & -\frac{t_1^3}{625(t_1^5-t_5)^2} &0
\end{pmatrix},
\]
\begin{equation}\small \Omega_4=\left({\large
  \begin{array}{ccccc}
    0 & 0 & 0 & 0 & \frac{6^4c_4}{t_1^6-t_6} \\
    0 & 0 & 0 & -\frac{6^4c_4}{t_1^6-t_6} & \frac{9\times6^4c_4t_1^5}{(t_1^6-t_6)^2}
    \\
    0 & 0 & \frac{6^4c_4}{t_1^6-t_6} & -\frac{3\times6^4c_4t_1^5}{(t_1^6-t_6)^2} & \frac{6^4c_4t_1^4(7t_1^6+20t_6)}{(t_1^6-t_6)^3}
    \\
    0 & -\frac{6^4c_4}{t_1^6-t_6} & -\frac{3\times6^4c_4t_1^5}{(t_1^6-t_6)^2} & \frac{6^4c_4t_1^4(14t_1^6-5t_6)}{(t_1^6-t_6)^3} &
    \frac{-6^4c_4t_1^3(56t_1^{12}+35t_1^6t_6-10t_6^2)}{(t_1^6-t_6)^4} \\
     \frac{6^4c_4}{t_1^6-t_6}& \frac{9\times6^4c_4t_1^5}{(t_1^6-t_6)^2} & \frac{6^4c_4t_1^4(7t_1^6+20t_6)}{(t_1^6-t_6)^3} & \frac{-6^4c_4t_1^3(56t_1^{12}+35t_1^6t_6-10t_6^2)}{(t_1^6-t_6)^4} &
    \frac{6^4c_4t_1^2(273t_1^{18}+238t_1^{12}t_6+217t_1^6t_6^2+t_6^3)}{(t_1^6-t_6)^5} \\
  \end{array}}
\right). \nonumber
\end{equation}

\section{Moduli space of enhanced Calabi-Yau varieties} \label{section EMS}
Let  $\T=\T_n$ be  the moduli of  pairs
$(X,[\alpha_1,\alpha_2,\ldots,\alpha_{n+1}])$, where $X$ is an
$n$-fold mirror variety and
$\{\alpha_1,\alpha_2,\ldots,\alpha_{n+1}\}$ is a  basis of
$H^n_\dR(X)$ compatible with its Hodge filtration, and such that the
intersection form matrix of this basis is constant, that is,
\begin{equation}
\label{11jan2016}
 \left(
                           \begin{array}{c}
                             \langle\alpha_i,\alpha_j\rangle \\
                           \end{array}
                         \right)_{1\leq i,j\leq n+1}=\Phi.
\end{equation}
%
If we denote by $\dt_n:=\dim \T_n$, then from \cite[Theorem
1~]{nikdelan14} we get \eqref{29dec2015}.
The objective of this section is to construct a coordinate system
for $\T$.

In Section \ref{intersection} we have fixed the basis $\{\omega_1,\omega_2,\ldots,\omega_{n+1}\}$ of $H^n_\dR(X)$ that is
compatible with its Hodge filtration.  Let $S=\left(
                           \begin{array}{c}
                             s_{ij} \\
                           \end{array}
                         \right)_{1\leq i,j\leq n+1}$ be a lower triangular matrix, whose  entries are indeterminates $s_{ij},\ \ i\geq j$ and $s_{11}=1$.
We define
$$
\alpha:=S\omega,
$$
where
\[
\omega:=
\left(
{\begin{array}{*{20}c}
 {\omega _1 } & {\omega _2 } &  \ldots  & {\omega _{n + 1} }  \\
\end{array}} \right)^\tr.
\]
We assume that $\left(
                           \begin{array}{c}
                             \langle\alpha_i,\alpha_j\rangle \\
                           \end{array}
                         \right)_{1\leq i,j\leq n+1}=\Phi$, and so,  we get the following equation
\begin{equation}\label{eq sost}
S\Omega S^\tr=\Phi.
\end{equation}
If we set  $\Psi=\left(
                           \begin{array}{c}
                             \Psi_{ij} \\
                           \end{array}
                         \right)_{1\leq i,j\leq n+1}:=S\Omega S^\tr$, then $\Psi$ is a $(-1)^n$-symmetric matrix
and $\Psi_{ij}=0$ for $i=1,2,\ldots,n$ and $j\leq n+1-i$. Moreover,
in the case that $n$ is an odd integer we get $\Psi_{ii}=0,\,
i=1,2,\ldots,n+1$.
Therefore, the equation (\ref{eq sost}) gives us
$d_0:=\frac{(n+2)(n+1)}{2}-\dt-2$ equations, where $\dt$ is given by
\eqref{29dec2015}.
These equations are independent from
each other and so we can express $d_0$ numbers of parameters
$s_{ij}$'s in terms of other $\dt-2$ parameters that we fix them as
\emph{independent parameters}. For simplicity we write the first
class of parameters as $\check t_1,\check t_2,\cdots, \check
t_{d_0}$ and the second class as $t_2,
t_3,\ldots,t_{n+1},t_{n+3},\ldots,t_\dt$. We put the independent
parameters $t_i$ inside $S$ according to the following rule which is not canonical: $t_i$'s  are written in  $S$ from  left to right and top to bottom in the entries $(i,j)$ for $i+j<n+2$ if $n$ is even and $i+j\leq n+2$ if $n$ is odd. The position of $\check t_i$'s inside $S$ can be chosen arbitrarily. For instance, for $n=1,2,3,4$ we have:
\[\small \left( {\begin{array}{*{20}{c}}
1&0\\
{{t_2}}&{{\check t}_1}
\end{array}} \right)\,,\,\,\left( {\begin{array}{*{20}{c}}
1&0&0\\
{{t_2}}&{{\check t}_2}&0\\
{{\check t}_4}&{{\check t}_3}&{{\check t}_1}
\end{array}} \right)\,,\,\, \left( {\begin{array}{*{20}{c}}
1&0&0&0\\
{{t_2}}&{{t_3}}&0&0\\
{{t_4}}&{{t_6}}&{{\check t}_2}&0\\
{{t_7}}&{{\check t}_4}&{{\check t}_3}&{{\check t}_1}
\end{array}} \right)\,,\,\, \left( {\begin{array}{*{20}{c}}
1&0&0&0&0\\
{{t_2}}&{{t_3}}&0&0&0\\
{{t_4}}&{{t_5}}&{{\check t}_3}&0&0\\
{{t_7}}&{{\check t}_7}&{{\check t}_5}&{{\check t}_2}&0\\
{{\check t}_9}&{{\check t}_8}&{{\check t}_6}&{{\check t}_4}&{{\check
t}_1}
\end{array}} \right)
.\]
Note that we have already used  $t_1,t_{n+2}$ as coordinate system
of $\Ts$ in Section \ref{section ms1}.

\begin{prop}
\label{11-01-2016}
The equation $S\Omega S^\tr=\Phi$ yields
\begin{equation}
\label{29/12/2015}
s_{(n+2-i)(n+2-i)}=
\frac{(-1)^{n+i+1}}{c_n(n+2)^n}\frac{t_1^{n+2}-t_{n+2}}{s_{ii}},
\end{equation}
where $i=1,2,\ldots, \frac{n+1}{2}$ if $n$ is an odd integer, and
$i=1,2,\ldots, \frac{n+2}{2}$ if $n$ is an even integer. Moreover,
one can compute $\check t_i$'s in terms of $t_i$'s. 
\end{prop}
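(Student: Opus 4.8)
The plan is to read the matrix identity (\ref{eq sost}), $S\Omega S^\tr=\Phi$, as a triangular polynomial system for the entries $s_{ij}$ ($i\geq j$, $s_{11}=1$) with coefficients in $\O_\Ts$, and to solve it by eliminating one equation at a time, organised along the anti-diagonals of $\Psi:=S\Omega S^\tr$. The structural facts used throughout are that $S$ is lower triangular and that, by Lemma~\ref{lemm 127}, the entries $\Omega_{kl}=\langle\omega_k,\omega_l\rangle$ of $\Omega$ vanish for $k+l\leq n+1$, are equal to $(-1)^{k-1}\langle\omega_1,\omega_{n+1}\rangle=(-1)^{k-1}(-(n+2))^n c_n/(t_1^{n+2}-t_{n+2})$ on the anti-diagonal $k+l=n+2$, and are explicitly computable regular functions for $k+l\geq n+3$. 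On $\T$, where $t_{n+2}(t_{n+2}-t_1^{n+2})$ is inverted, the anti-diagonal entries $\Omega_{k,n+2-k}$ are units.

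First I would derive (\ref{29/12/2015}) from the anti-diagonal of $\Psi$. Expanding $\Psi_{i,n+2-i}=\sum_{k\leq i,\ l\leq n+2-i}s_{ik}\Omega_{kl}s_{(n+2-i)l}$ and observing that a non-vanishing $\Omega_{kl}$ needs $k+l\geq n+2$ while $k\leq i$ and $l\leq n+2-i$ force $k+l\leq n+2$, only the term $k=i$, $l=n+2-i$ survives, so
\[
\Psi_{i,n+2-i}=s_{ii}\,\Omega_{i,n+2-i}\,s_{(n+2-i)(n+2-i)}.
\]
From the explicit shape of $\Phi_n$ in (\ref{eq phi odd}) and (\ref{eq phi even}) one reads off $\Phi_{i,n+2-i}=1$ for $i=1,\dots,\frac{n+1}{2}$ if $n$ is odd and $i=1,\dots,\frac{n+2}{2}$ if $n$ is even; substituting the value of $\Omega_{i,n+2-i}$ recalled above and using $(-1)^{i-1+n}=(-1)^{n+i+1}$ yields precisely (\ref{29/12/2015}). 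For even $n$ and $i=\frac{n+2}{2}$ the two diagonal indices coincide, so the same computation gives $s_{ii}^2=(-1)^{n+i+1}(t_1^{n+2}-t_{n+2})/(c_n(n+2)^n)$, pinning that single entry down up to sign.

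Next I would compute the $\check t_i$'s by induction on the anti-diagonal level $m=i+j$, beginning at $m=n+3$, having fixed the partition of the $s_{ij}$ into independent parameters $t_i$ and dependent parameters $\check t_i$ as in the matrices displayed for $n=1,2,3,4$ (the independent ones occupying the top-left region, the dependent ones the bottom-right). For $i\leq j$ with $i+j=m$ --- the case $i>j$ being redundant by the $(-1)^n$-symmetry of $\Psi$ and $\Phi$, and $i=j$ with $n$ odd being the trivial identity $0=0$ --- expand $\Psi_{ij}=\Phi_{ij}$ over the admissible pairs $k\leq i$, $l\leq j$ with $n+2\leq k+l\leq m$. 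The pair $k=i$, $l=n+2-i$ is admissible (because $i+j\geq n+2$ forces $n+2-i\leq j$) and contributes $s_{ii}\,\Omega_{i,n+2-i}\,s_{j,n+2-i}$, in which the coefficient $s_{ii}\Omega_{i,n+2-i}$ is a unit on $\T$ (the diagonal entries $s_{aa}$ all being units, by the previous step) and the remaining factor $s_{j,n+2-i}$ is exactly the dependent entry $\check t$ assigned to this anti-diagonal. Every other admissible pair contributes a monomial each of whose factors is $s_{11}=1$, a free parameter $t_a$, a diagonal entry already determined above, or an off-diagonal entry whose two indices sum to strictly more than $n+2$ and which was therefore resolved at an earlier level. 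Hence $\Psi_{ij}=\Phi_{ij}$ is linear in the one new unknown $s_{j,n+2-i}$ with invertible coefficient, and solving it expresses that $\check t$ as a regular function of $t_1$, $t_{n+2}$ and the free $t_a$'s; iterating over $m$, and within each $m$ over $i$, exhausts all the $\check t_i$'s.

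The main obstacle is the combinatorial bookkeeping behind the previous paragraph: one must check, uniformly in $n$, that the chosen partition of the $s_{ij}$ into independent and dependent parameters is compatible with this elimination order --- that every non-trivial equation $\Psi_{ij}=\Phi_{ij}$ meets exactly one not-yet-resolved entry, linearly (up to a harmless factor $2$ when $i=j$) and with a unit coefficient, and in particular that the ``mirror'' admissible pair $k=n+2-j$, $l=j$ carries a free parameter rather than a second new unknown. This is precisely where the lower-triangular shape of $S$ and the ``anti-triangular, explicitly known above the anti-diagonal'' shape of $\Omega$ furnished by Lemma~\ref{lemm 127} must be played against each other in detail; for the values of $n$ actually needed this is routine to verify directly, as the cases $n=1,2,3,4$ illustrate.
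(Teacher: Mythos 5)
Your argument is correct and is essentially the paper's own proof in expanded form: the paper likewise obtains \eqref{29/12/2015} from the $(i,n+2-i)$-th entries of $S\Omega S^\tr=\Phi$ (using Lemma \ref{lemm 127}), and determines each $\check t_k=s_{ij}$ from the $(n+2-j,i)$-th entry recursively, with divisions only by $s_{ii}$'s, $t_{n+2}$ and $t_{n+2}-t_1^{n+2}$ --- exactly the unknown-to-equation assignment your elimination scheme reproduces, the paper merely asserting the bookkeeping (and offering the alternative $S:=S^{-1}$, $S\Phi S^\tr=\Omega$) where you spell it out. The only quibble is your sentence claiming every previously resolved off-diagonal entry has index sum strictly greater than $n+2$: for even $n$ dependent entries with index sum exactly $n+2$ (e.g.\ $s_{42}$ for $n=4$) do occur, but they are still resolved at an equation of strictly smaller level $n+2+(i-k)$, so the induction goes through unchanged.
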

\begin{proof}
Let us first count the number of equalities that we get from
$S\Omega S^\tr=\Phi$. This is $\frac{(n+1)(n+2)}{2}+1-\dt$.
Note that the left upper triangle of this equality consists of trivial equalities $0=0$.
The equality \eqref{29/12/2015} follows from the $(i,n+2-i)$-th
entry of $S\Omega S^\tr=\Phi$. We have plugged the parameters
$\check t_k=s_{ij}$ inside $S$ such that the equality corresponding
to the $(n+2-j,i)$-th entry of $S\Omega S^\tr=\Phi$ gives us an
equation which computes $\check t_k$ in terms of $\check t_r, \ r<k$
and $t_s$'s. Note that only divisions by $s_{ii}$'s, $t_{n+2}-t_1^{n+2}$ and $t_{n+2}$ occurs.
Another way to see this is to redefine $S:=S^{-1}$ and so we will have the
equality $S\Phi S^\tr=\Omega$.
\end{proof}

For $n=1,2,3,4$, we express $\check t_j$'s in terms of $t_i$'s as
follows:\\
$n=1$:
  {\small \[
     {\check t}_1=-\frac{1}{3c_1}(t_1^3-t_3).
  \]}
$n=2$:
  {\small \begin{align}
  &{\check t}_1=\frac{1}{16c_2}(t_1^4-t_4)\ ,\ \ \
  {\check t}_2^2=-\frac{1}{16c_2}(t_1^4-t_4)\, , \label{eq tc2} \\
  &{\check t}_3=\frac{1}{16c_2}(-16c_2t_2{\check t}_2+2t_1^3)\ , \ \ \
  {\check t}_4=\frac{1}{32c_2}(-16c_2t_2^2+t_1^2)\, . \nonumber
  \end{align}}
  $n=3$:
{\small \begin{align}
  &{\check t}_1=-625t_1^5+625t_5\, , \ \ \  {\check t}_2=\frac{625t_1^5-625t_5}{t_3}\, , \nonumber \\
  &{\check t}_3=\frac{-625t_1^5t_2+625t_2t_5-3125t_1^4t_3}{t_3}\, , \ \ \  {\check t}_4=-t_2t_6+t_3t_4-3125t_1^3\, . \nonumber
  \end{align}}
  $n=4$:
{\small \begin{align}\label{eq tc3}
  &{\check t}_1=\frac{t_1^6-t_6}{1296c_4}\ ,\ \ \ \
  {\check t}_2=-\frac{t_1^6-t_6}{1296c_4t_3}\ , \ \ \
  {\check t}_3^2=\frac{t_1^6-t_6}{1296c_4}\ , \ \ \
  {\check t}_4=\frac{t_1^6t_2+9t_1^5t_3-t_2t_6}{1296c_4t_3}\ ,
   \\ \label{eq tc3}
  &
   {\check t}_5=\frac{-432c_4t_5{\check t}_3-t_1^5}{432c_4t_3}, \ \ \ \
  {\check t}_6=\frac{1296c_4t_2t_5{\check t}_3-1296c_4t_3t_4{\check t}_3+3t_1^5t_2+20t_1^4t_3}{1296c_4t_3}\, , \nonumber\\
  &{\check t}_7=\frac{-1296c_4t_5^2-5t_1^4}{2592c_4t_3}\, , \ \ \ \ \nonumber
  {\check t}_8=\frac{1296c_4t_2t_5^2-2592c_4t_3^2t_7-2592c_4t_3t_4t_5+5t_1^4t_2+20t_1^3t_3}{2592c_4t_3}\, ,\nonumber \\
  &{\check t}_9=\frac{-2592c_4t_2t_7-1296c_4t_4^2+t_1^2}{2592c_4}\, .\nonumber
  \end{align}}

\section{Gauss-Manin connection} \label{section gmc}
We return to the Gauss-Manin connection $\nabla$, that was
introduced in \eqref{eq gmn}, and we proceed with the computation of
the Gauss-Manin connection matrix $\gmo$, which is given in
\eqref{eq Atilde}.

If we denote by $A(z)$ the Gauss-Manin connection matrix of the
family $\X_{1,z}$ in the basis $\{\eta,{\frac{\partial\eta}{\partial
z}},\ldots, {\frac{\partial^n\eta}{\partial z^n}}\}$, that is
\[
\nabla\left( {\begin{array}{*{20}c}
 {\eta } & {\frac{\partial\eta}{\partial z} } &  \ldots  & {\frac{\partial^n\eta}{\partial z^n} }  \\
\end{array}} \right)^\tr=A(z)dz\otimes
\left( {\begin{array}{*{20}c}
 {\eta } & {\frac{\partial\eta}{\partial z} } &  \ldots  & {\frac{\partial^n\eta}{\partial z^n} }  \\
\end{array}} \right)^\tr\, ,
\]
then we get
\[A(z)=\left( {\begin{array}{*{20}{c}}
0&1&0&0& \ldots &0\\
0&0&1&0& \ldots &0\\
0&0&0&1& \ldots &0\\
 \vdots & \vdots & \vdots & \vdots & \ddots & \vdots \\
0&0&0&0& \ldots &1\\
{{b_1}(z)}&{{b_2}(z)}&{{b_3}(z)}&{{b_4}(z)}& \ldots &{{b_{n +
1}}(z)}
\end{array}} \right)\]
where the functions $b_i(z)$'s are the coefficients of the
Picard-Fuchs equation \eqref{eq pf eta}  associated with the $n$-form $\eta$
that we write in the following format
\[
\frac{\partial^{n+1} \eta}{\partial
z^{n+1}}=b_1(z)\eta+b_2(z)\frac{\partial \eta}{\partial
z}+\ldots+b_{n+1}(z)\frac{\partial^n \eta}{\partial z^n},\ \ \hbox{modulo exact forms}.
\]
 We calculate $\nabla$ with respect to the
basis (\ref{29oct11})
 of $H^n_\dR(\X/\Ts)$.
For this purpose we return back to the one parameter case. For
$z:=\frac{t_{n+2}}{t_1^{n+2}}$,  consider the map
$$
g:\X_{{t_1,t_{n+2}}}\to \X_{1,z},
$$
given by (\ref{20m2014}) with $k=t_1^{-1}$. We have
$g^*\eta=t_1\omega_1$, where by abuse of notation we just write
$\eta=t_1\omega_1$, and
$$
\frac{\partial}{\partial
z}=\frac{-1}{n+2}\frac{t_1^{n+3}}{t_{n+2}}\frac{\partial}{\partial
t_1}.
$$
From these two equalities we obtain the base change matrix $\tilde
S=\tilde S(t_1,t_{n+2})$ such that
$$
\left( {\begin{array}{*{20}c}
 {\eta } & {\frac{\partial\eta}{\partial z} } &  \ldots  & {\frac{\partial^n\eta}{\partial z^n} }  \\
\end{array}} \right)^\tr= \tilde
S^{-1}\left( {\begin{array}{*{20}c}
 {\omega_1 } & {\omega_2} &  \ldots  & {\omega_{n+1} }  \\
\end{array}} \right)^\tr.
$$
Thus we find the Gauss-Manin connection in the basis $\omega_i,\
i=1,2,\ldots,n+1$ as follow:
$$
\gmo=\left (d\tilde S+\tilde S\cdot
A(\frac{t_{n+2}}{t_1^{n+2}})\cdot d(\frac{t_{n+2}}{t_1^{n+2}})\right
)\cdot \tilde S^{-1}.
$$
Let $\gmo[i,j]$ be the $(i,j)$-th entry of the Gauss-Manin
connection matrix $\gmo$.
We have
\begin{eqnarray}
\label{16/1/2016-1}
\gmo[i,i] &=& -\frac{i}{(n+2)t_{n+2}}dt_{n+2}\, , \,\ 1 \leq i \leq n\, \\
\label{16/1/2016-2}
\gmo[i,i+1]&=& dt_1-\frac{t_1}{(n+2)t_{n+2}}dt_{n+2}\, , \,\ 1 \leq i \leq n\, ,  \\
\gmo[n+1,j]&=&\frac{-S_2(n+2,j)t_1^j}{t_1^{n+2}-t_{n+2}}dt_1+\frac{S_2(n+2,j)t_1^{j+1}}{(n+2)t_{n+2}(t_1^{n+2}-t_{n+2})}dt_{n+2}\, , \,\ 1 \leq j \leq n\, , \nonumber \\
\gmo[n+1,n+1]&=&\frac{-S_2(n+2,n+1)t_1^{n+1}}{t_1^{n+2}-t_{n+2}}dt_1+\frac{\frac{n(n+1)}{2}t_1^{n+2}+(n+1)t_{n+2}}{(n+2)t_{n+2}(t_1^{n+2}-t_{n+2})}dt_{n+2}\,
,\nonumber
\end{eqnarray}
where $S_2(r,s)$ is the Stirling number of the second kind defined in \eqref{16jan2016},
and the rest of the entries of $\gmo$ are zero.
The equalities \eqref{16/1/2016-1} and \eqref{16/1/2016-2} are easy to check, and those with Stirling numbers are checked for  $n=1,2,3,4$.
It would be interesting to prove this statement for arbitrary $n$. We will not need such explicit expressions for the proof of our main theorem.
The Gauss-Manin connection matrix $\gmo$ for $n=1,2$ are
respectively given as follows:
\begin{equation} \small
\gmo_1=\left({\large
              \begin{array}{cc}
                -\frac{1}{3t_3}dt_3 & dt_1-\frac{t_1}{3t_3}dt_3
                \\\\
                -\frac{t_1}{t_1^3-t_3}dt_1+\frac{t_1^2}{3t_3(t_1^3-t_3)}dt_3 &
                -\frac{3t_1^2}{t_1^3-t_3}dt_1+\frac{t_1^3+2t_3}{3t_3(t_1^3-t_3)}dt_3 \\
              \end{array}
            }\right),
            \nonumber
\end{equation}
\begin{equation} \small
\gmo_2=\left({\large
              \begin{array}{ccc}
                -\frac{1}{4t_4}dt_4 & dt_1-\frac{t_1}{4t_4}dt_4 & 0
                \\\\
                0 & -\frac{2}{4t_4}dt_4 & dt_1-\frac{t_1}{4t_4}dt_4
                \\\\
                -\frac{t_1}{t_1^4-t_4}dt_1+\frac{t_1^2}{4t_4(t_1^4-t_4)}dt_4 & -\frac{7t_1^2}{t_1^4-t_4}dt_1+\frac{7t_1^3}{4t_4(t_1^4-t_4)}dt_4 &
                -\frac{6t_1^3}{t_1^4-t_4}dt_1+\frac{3t_1^4+3t_4}{4t_4(t_1^4-t_4)}dt_4 \\
              \end{array}
            }\right) .
            \nonumber
\end{equation}
Let  $\gm$ to be the Gauss-Manin connection matrix of the family
$\X/\T$ written in the basis $\alpha_i,\ i=1,2,\ldots,\alpha_{n+1}$,
i.e., $\nabla \alpha=\gm \alpha$. Then we calculate $\gm$ as follow:
\begin{equation}\label{eq gm 3/27/2016}
\gm=\left (dS+S\cdot \gmo\right )\cdot S^{-1},
\end{equation}
where $S$ is the base change matrix $\alpha=S\omega$.

\section{Proof of Theorem \ref{main3}} \label{section main3}
As we saw in \eqref{eq gm 3/27/2016}, the Gauss-Manin connection
matrix of the family $\X/\T$ in the basis $\alpha$ is given by
\begin{equation}
\gm= dS\cdot S^{-1}+S\cdot \gmo \cdot S^{-1}.
\end{equation}
For a moment, let us consider the entries $s_{ij},\ j\leq i,
(i,j)\not=(1,1)$ of $S$ as independent parameters with only the
following relation:
\begin{equation}
\label{eybivafahalemanbenegar} s_{(n+1)(n+1)}+s_{nn}s_{22}=0.
\end{equation}
We denote by $\tilde\T$ and $\tilde\alpha$ the corresponding family of varieties and a basis
of differential forms. The existence of a vector field $\Ra$  in $\tilde\T$ with the desired
property in relation with the Gauss-Manin connection is equivalent to solve the equation
\begin{equation}
\label{eq dots2} \dot{S}=\Yuk S-S\cdot \gmo(\Ra).
\end{equation}
where $\dot x:=dx(\Ra)$ is the derivation of the function
$x$ along the vector field $\Ra$ in $\tilde\T$. The equalities
corresponding to the entries $(i,j), j\leq i,\ \ (i,j)\not=(1,1)$
serves as the definition of $\dot{s_{ij}}$. The equality
corresponding to $(1,1)$-th and $(1,2)$-th entries give us
respectively
$$
\dot{t}_1=t_3-t_1t_2,\ \
\dot{t}_{n+2}=-(n+2)t_2t_{n+2}.
$$
Recall that $t_2=s_{21}$ and $t_3=s_{22}$. The equalities
corresponding to $(i,i+1)$-th, $i=2,\cdots, n-1$, entries compute
the quantities $\Yuk_i$'s:
\begin{equation}
 \label{11/1/2016}
\Yuk_{i-1}=\frac{t_3s_{ii}}{s_{(i+1)(i+1)}},\ \ i=2,3,\ldots, n-1.
\end{equation}
Finally the equality corresponding to the $(n,n+1)$-th entry is
given by (\ref{eybivafahalemanbenegar}) which is already implemented
in the definition of $\tilde\T$. All the rest are trivial equalities
$0=0$. We conclude the statement of Theorem \ref{main3} for the
moduli space $\tilde\T$.

Now, let us prove the main theorem for the moduli space $\T$. First, note that we have a map
\begin{equation}
 \tilde\T\to \Mat_{(n+1)\times(n+1)}(\C),\ \ (t_1,t_{n+2}, S)\mapsto S \Omega S^{\tr}
\end{equation}
and $\T$ is the fiber of this map over the point $\Phi$. We prove that the vector field $\Ra$ is tangent to the fiber of
the above map over $\Phi$. This follows from
\begin{eqnarray*}
\overbrace{(S \Omega S^{\tr})}^{.}  &=&
\dot S\Omega S^{\tr}+S\dot\Omega S^{\tr}+ S\Omega \dot S^{\tr}\\
&=&
(\Yuk S-S\gmo )\Omega S^{\tr}+S (\gmo \Omega+\Omega\gmo^{\tr}) S^{\tr}+ S\Omega ( S^{\tr}\Yuk^{\tr}-\gmo^{\tr} S^{\tr})\\
&=&
\Yuk \Phi+ \Phi\Yuk^{\tr} \\
&=& 0.
\end{eqnarray*}
where $\dot x:=dx(\Ra)$ is the derivation of the function $x$ along
the vector field $\Ra$ in $\T$.
 The last equality follows from \eqref{11/1/2016} and
Proposition \ref{11-01-2016}.  It follows that if $n$ is an even
integer then $\Yuk_{i-1}=-\Yuk_{n-i},\ i=2,\ldots,\frac{n}{2}$ and
if $n$ is an odd integer then $\Yuk_{i-1}=-\Yuk_{n-i}, \
i=2,\ldots,\frac{n-1}{2}$ and
$$
\Yuk_{\frac{n-1}{2}}=(-1)^{\frac{3n+3}{2}}c_n(n+2)^n\frac{t_3s_{\frac{n+1}{2}\frac{n+1}{2}}^2}{t_1^{n+2}-t_{n+2}}.$$
To prove the uniqueness, first notice that \eqref{11/1/2016}
guaranties the uniqueness of $\Yuk_i$'s. Suppose that there are two
vector fields $\Ra$ and $\hat{\Ra}$ such that
$\nabla_{\Ra}\alpha=\Yuk\alpha$ and
$\nabla_{\hat{\Ra}}\alpha=\Yuk\alpha$. If we set
$\H:=\Ra-\hat{\Ra}$, then
\begin{equation} \label{eq 4114 1}
\nabla_{{\H}}\alpha=0.
\end{equation}
We need to prove that ${\H}=0$, and to do this it is enough to
verify that any integral curve of ${\H}$ is a constant point. Assume
that $\gamma$ is an integral curve of ${\H}$ given as follow
\begin{align}
\gamma:(\C,&\,0) \to \T;\qquad
           x\mapsto \gamma(x). \nonumber
\end{align}
Let us  denote by $\mathcal{C}:=\gamma(\C,0)\subset \T$ the trajectory
of $\gamma$ in $\T$ . We know that the points of $\T$ are
pairs $(X,[\alpha_1,\alpha_2,\ldots,\alpha_{n+1}])$, in
which $X$ is an $n$-fold mirror variety and
$\{\alpha_1,\alpha_2,\ldots,\alpha_{n+1}\}$ is a  basis of
$H^n_\dR(X)$ compatible with its Hodge filtration and has constant
intersection form matrix $\Phi$. Thus, we can parameterize $\gamma$
in such a way that for any $x\in (\C,0)$ the vector field $\H$ on
$\mathcal{C}$ reduces to $\frac{\partial}{\partial x}$, and so, we
have
$\gamma(x)=(X(x),[\alpha_1(x),\alpha_2(x),\ldots,\alpha_{n+1}(x)])$.
We know that $X(x)$ is a member of mirror family that depends only
on the parameter $z$, hence $x$ holomorphically depends to $z$. From
this we obtain a holomorphic function $f$ such that $x=f(z)$. We now
proceed to prove that $f$ is constant. Otherwise, by contradiction
suppose that $f'\neq 0$. Then we get
\begin{equation}\label{eq 72414}
\nabla_{\frac{\partial}{\partial x}}\alpha_1= \frac{\partial
z}{\partial x}\nabla_{\frac{\partial}{\partial z}}\alpha_1.
\end{equation}
Equation (\ref{eq 4114 1}) gives that
$\nabla_{\frac{\partial}{\partial x}}\alpha_1=0$, but since
$\alpha_1=\omega_1$, it follows that the right hand side of (\ref{eq
72414}) is not zero, which is a contradiction. Thus $f$ is constant
and $X(x)$ does not depend on the parameter $x$. Since $X(x)=X$ does
not depends on $x$, we can write the Taylor series of
$\alpha_i(x),\,\ i=1,2,3,\ldots, n+1,$ in $x$ at some point $x_0$ as
$\alpha_i(x)=\sum_j (x-x_0)^j\alpha_{i,j},$ where $\alpha_{i,j}$'s
are elements in $H_\dR^n(X)$ independent of $x$. In this way the
action of $\nabla_{\frac{\partial}{\partial x}}$ on $\alpha_i$ is
just the usual derivation $\frac{\partial}{\partial x}$. Again
according to (\ref{eq 4114 1}) we get
$\nabla_{\frac{\partial}{\partial x}}\alpha_i=0$, and we conclude
that $\alpha_i$'s also do not depend on $x$. Therefore, the image of
$\gamma$ is a point.\hfill\(\square\)\\

The modular vector field $\Ra$ for $n=1, 2, 3, 4$, are given as
follows:\\
$n=1$:
  \begin{equation} \label{eq modvec 1}
\Ra_{1}:\left \{ \begin{array}{l}
\dot{t}_1=\frac{1}{3c_1}(-3c_1t_1t_2-(t_1^3-t_3))\\
\dot{t}_2=\frac{1}{9c_1^2}(t_1(t_1^3-t_3)-9c_1^2t_2^2)\\
\dot{t}_3=-3t_2t_3
\end{array} \right..
\end{equation}
$n=2$: We know that $\dim \T_2=3$, hence the modular vector
  field $\Ra_{2}$ should have three components, but to avoid the
  second root of $\check t_2$ that comes from \eqref{eq tc2} we add
  one more variable $t_3:=\check t_2$. Thus we find $\Ra_{2}$ as
  follow:
  \begin{equation} \label{eq modvec 2}
\Ra_{2}:\left \{ \begin{array}{l} \dot{t}_1=-t_1t_2+t_3\\
\dot{t}_2=-\frac{1}{32c_2}(t_1^2+16c_2t_2^2)\\
\dot{t}_3=-\frac{1}{8c_2}(16c_2t_2t_3+t_1^3)\\  \dot{t}_4=-4t_2t_4
\end{array} \right.,
\end{equation}
such that the following equation holds among $t_i$'s
\begin{equation}\label{eq t3}
 t_3^2=-\frac{1}{16c_2}(t_1^4-t_4).
\end{equation}

$n=3$: The vector field $\Ra_3$ has been calculated in \cite{ho22}, but in a different chart. $\Ra_3$ in the chart chosen in this paper is as follow:
\begin{equation}  \label{eq modvec 3}
\Ra_{3}:\left \{ \begin{array}{l}
\dot{t}_1=t_3-t_1t_2\\
\dot{t}_2=\frac{t_3^3t_4-5^4t_2^2(t_1^5-t_5)}{5^4(t_1^5-t_5)}\\
\dot{t}_3=\frac{t_3^3t_6-3\times5^4t_2t_3(t_1^5-t_5)}{5^4(t_1^5-t_5)} \\
\dot{t}_4=-t_2t_4-t_7\\
\dot{t}_5=-5t_2t_5\\
\dot{t}_6=5^5t_1^3-t_2t_6-2t_3t_4\\
\dot{t}_7=-5^4t_1t_3-t_2t_7 \\
\end{array} \right.\,.
\end{equation}

$n=4$: Similar to the case $n=2$ and in order to avoid the
  second root of $\check t_3$ given in \eqref{eq tc3}, we add the variable $t_8:=\check t_3$
  and we find:
  \begin{equation}  \label{eq modvec 4}
 \Ra_{4}:\left \{ \begin{array}{l}
\dot{t}_1=t_3-t_1t_2\\
\dot{t}_2=\frac{1296c_4t_3^2t_4t_8-t_1^6t_2^2+t_2^2t_6}{t_1^6-t_6}\\
\dot{t}_3=\frac{1296c_4t_3^2t_5t_8-3t_1^6t_2t_3+3t_2t_3t_6}{t_1^6-t_6} \\
\dot{t}_4=\frac{-1296c_4t_3^2t_7t_8-t_1^6t_2t_4+t_2t_4t_6}{t_1^6-t_6}\\
\dot{t}_5=\frac{1296c_4t_3t_5^2t_8-4t_1^6t_2t_5-2t_1^6t_3t_4+5t_1^4t_3t_8+4t_2t_5t_6+2t_3t_4t_6}{2(t_1^6-t_6)}\\
\dot{t}_6=-6t_2t_6\\
\dot{t}_7=\frac{1296c_4t_4^2-t_1^2}{2592c_4} \\
\dot{t}_8=\frac{-3t_1^6t_2t_8+3t_1^5t_3t_8+3t_2t_6t_8}{t_1^6-t_6}
\end{array} \right.,
\end{equation}
where
\begin{equation}\label{eq t8}
 t_8^2=\frac{1}{1296c_4}(t_1^6-t_6).
\end{equation}
In this case the functions $\Yuk_1$ and $\Yuk_2$ are given by
\begin{equation} \label{eq yukawa 4}
\Yuk_1^2=(-\Yuk_2)^2=\frac{1296c_4t_3^4}{t_1^6-t_6}.
\end{equation}

\section{Enumerative properties of $q$-expansions} \label{section ep}
In order to find the $q$-expansion of a solution of $\Ra$, we follow
the process given in \cite[\S~5.2]{GMCD-MQCY3} for the case $n=3$.
Consider the vector field $\Ra$ as follow
\begin{equation} \label{eq modvec}
\Ra:\left \{ \begin{array}{l}
\dot{t}_1=f_1(t_1,t_2,\ldots,t_\dt)\\
\dot{t}_2=f_2(t_1,t_2,\ldots,t_\dt)\\
\vdots \\
\dot{t}_\dt=f_\dt(t_1,t_2,\ldots,t_\dt)
\end{array} \right.,
\end{equation}
where for $1\leq j \leq \dt$,
$$
f_j\in \C[t_1,t_2,\ldots, t_{\dt},\frac{1}{t_{n+2}(t_{n+2}-t_1^{n+2})\check t 
 }],
$$
and $\check t$ is the same as in Theorem \ref{main3}. Let us assume
that
$$
t_j=\sum_{k=0}^\infty t_{j,k}q^k,\ \ j=1,2,\ldots,\dt,
$$
form a solution of $\Ra$, where $t_{j,k}$'s are subject to be
constants, and $\dot \ast=a\cdot q\cdot \frac{\partial
\ast}{\partial q}$, where $a$ is an unknown constant.
By comparing the coefficients of $q^k, k\geq 2$ in both sides of
(\ref{eq modvec})  we find recursions for $t_{j,k}$'s. Let
$$
p_k:=(t_{1,k},t_{2,k},\ldots,t_{\dt,k})\, , \,\,\, k=1,2,3,\ldots.
$$
By comparing the coefficients of $q^0$ we get that $p_0$ is a
singularity of $\Ra$. The same for $q^1$, gives us some constrains
on $t_{j,1}$. Therefore, some of the coefficients $t_{j,k}$, for finite number of $j$ and $k$,  are free
initial parameters of the recursion and we have to fix them by other means.

\subsection{The case $n=1$} \label{subsection epk3}

Considering the modular vector field $\Ra_1$ given in \eqref{eq
modvec 1}, we find $Sing(\Ra_1)=Sing_1\cup Sing_2$, where
\begin{align}
&Sing_1: t_2=t_1^3-t_3=0, \nonumber\\
&Sing_2: t_3=t_1^2+3c_1t_2=0. \nonumber
\end{align}
Thus we get
\begin{align}
&p_0=(t_{1,0},-\frac{1}{3c_1}t_{1,0}^2,0)\in Sing_2.\nonumber
\end{align}
The comparison of the coefficients of $q^1$  gives us
$a=\frac{1}{c_1}t_{1,0}^2$ and
\begin{align}
&p_1=(\frac{2}{9}\frac{t_{3,1}}{t_{1,0}^2},-\frac{1}{27c_1}\frac{t_{3,1}}{t_{1,0}},t_{3,1}).\nonumber
\end{align}
If we choose $c_1=3^{-3}$, $t_{1,0}= \frac{1}{3}$ and $t_{3,1}=1$,
then we find the solution given in \eqref{eq solution R01} for
$\Ra_1$.

The coefficient of $q^k,\, k=0,1,2,3,\ldots$, in
$\theta_3(q^{2r})\theta_3(q^{2s}),\, r,s\in \N$, gives the number of
integer solutions of the equation $rx^2+sy^2=k$, where $x$ and $y$
are unknown variables. Therefore


\begin{prop}\label{coro ep1}
The coefficient of $q^k,\, k=0,1,2,3,\ldots$, in
$\theta_3(q^2)\theta_3(q^6)$ gives the number of integer solutions
of equation $x^2+3y^2=k$.
\end{prop}
For more information about the number of integer solutions of
equation $x^2+3y^2=k$ see \cite[A033716]{Oeis} and
the references therein. As we saw in \eqref{eq solution R01},
${t}_1(q)=\frac{1}{3}(2\theta_3(q^2)\theta_3(q^6)-\theta_3(-q^2)\theta_3(-q^6))$.
If we denote by $t_1(q):=\sum_{k=0}^\infty t_{1,k}q^k$, then in the
following proposition we state enumerative properties of $t_{1,k}$.
\begin{prop}
Let $k$ be a non-negative integer. If $k=4m$ for some $m\in \Z$,
then the equation $x^2+3y^2=k$ has $3t_{1,k}$ integer solutions,
otherwise the equation has $t_{1,k}$ integer solutions.
\end{prop}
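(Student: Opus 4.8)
The plan is to reduce the statement about the coefficients $t_{1,k}$ to the counting lemma and corollary already proved. Write $N(k)$ for the number of integer solutions of $x^2+3y^2=k$, so that by Corollary \ref{coro ep1} we have $\theta_3(q^2)\theta_3(q^6)=\sum_{k\geq 0}N(k)q^k$. Replacing $q$ by $-q$ gives $\theta_3(-q^2)\theta_3(-q^6)=\sum_{k\geq 0}(-1)^kN(k)q^k$, since $\theta_3(q^2)\theta_3(q^6)$ is an even power series in the variable $q$ divided into even and odd parts only by parity of $k$. Hence from \eqref{eq solution R01},
\begin{equation}
t_{1,k}=\frac{1}{3}\bigl(2N(k)-(-1)^kN(k)\bigr)=\frac{1}{3}\bigl(2-(-1)^k\bigr)N(k).
\end{equation}

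Next I would split on the parity of $k$. If $k$ is odd, then $2-(-1)^k=3$, so $t_{1,k}=N(k)$, which is the "otherwise" case. If $k$ is even, then $2-(-1)^k=1$, so $t_{1,k}=\tfrac13 N(k)$, equivalently $N(k)=3t_{1,k}$. To finish I must show that for even $k$ the hypothesis "$k=4m$" is automatically satisfied whenever $N(k)\neq 0$, i.e. there is no solution of $x^2+3y^2=k$ with $k\equiv 2\pmod 4$. This is an elementary congruence check: modulo $4$, squares are $0$ or $1$, so $x^2+3y^2\equiv x^2-y^2\pmod 4$ takes only the values $0,1,3$ (the pairs $(x^2,y^2)\in\{0,1\}^2$ give $0,1,-1\equiv3,0$), never $2$. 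Therefore if $k\equiv 2\pmod 4$ then $N(k)=0$ and the claimed equality $N(k)=3t_{1,k}$ holds trivially (both sides zero); and if $k\equiv 0\pmod 4$, i.e. $k=4m$, the equality $N(k)=3t_{1,k}$ is exactly what we derived. Combining the two parity cases yields the proposition.

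I do not expect any serious obstacle here; the only point requiring a moment's care is the bookkeeping of which residue classes mod $4$ are hit by $x^2+3y^2$, and the observation that the statement is vacuously/trivially true on the class $k\equiv 2\pmod 4$ precisely because that class is never represented. One could alternatively phrase the whole argument at the level of $q$-series: note that $2\theta_3(q^2)\theta_3(q^6)-\theta_3(-q^2)\theta_3(-q^6)$ equals $3\theta_3(q^2)\theta_3(q^6)$ on the odd part and $\theta_3(q^2)\theta_3(q^6)$ on the even part, and that the even part of $\theta_3(q^2)\theta_3(q^6)$ is supported on exponents divisible by $4$ because the exponent $i^2+3j^2$ is even only when $i\equiv j\pmod 2$, forcing $i^2+3j^2\equiv 0\pmod 4$. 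Either route is short; I would present the congruence version since it makes transparent why the factor is $3$ exactly on the classes where a representation can occur.
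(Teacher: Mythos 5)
Your proof is correct and follows essentially the same route as the paper's: the paper also writes $\theta_3(q^2)\theta_3(q^6)=\sum a_kq^k$, $\theta_3(-q^2)\theta_3(-q^6)=\sum b_kq^k$, asserts $a_k=b_k$ when $4\mid k$ and $a_k=-b_k$ otherwise, and concludes via Corollary \ref{coro ep1}; you merely make explicit the two facts the paper leaves unproved, namely $b_k=(-1)^ka_k$ and the non-representability of $k\equiv 2\pmod 4$ by $x^2+3y^2$. One small caveat: a literal substitution $q\mapsto -q$ leaves $\theta_3(q^2)\theta_3(q^6)$ unchanged, so the first step should be justified as $\theta_3(-q^2)\theta_3(-q^6)=\sum_{m,n}(-1)^{m+n}q^{m^2+3n^2}$ together with $(-1)^{m+n}=(-1)^{m^2+3n^2}=(-1)^k$ --- which is exactly what your alternative $q$-series phrasing at the end supplies.
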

\begin{proof}
Suppose that $\theta_3(q^2)\theta_3(q^6)=\sum_{k=0}^\infty a_{k}q^k$
and $\theta_3(-q^2)\theta_3(-q^6)=\sum_{k=0}^\infty b_{k}q^k$. Fix a
non-negative integer $k$. If $k=4m$ for some $m\in \Z$,  then
$a_k=b_k$, otherwise $a_k=-b_k$. This fact together with Proposition
\ref{coro ep1} complete the proof.
\end{proof}
Y. Martin in \cite{yvma} studied a more general class of
$\eta$-quotients. By definition an \emph{$\eta$-quotient} is a
function $f(q)$ of the form $f(q)=\prod_{j=1}^s\eta^{r_j}(q^{t_j})$,
where $t_j$'s are positive integers and $r_j$'s are arbitrary
integers. He gives an explicit finite classification of modular
forms of this type which is listed in \cite[Table~I]{yvma}. In
\eqref{eq solution R01} we found
\begin{equation} \label{eq t3ec}
{t}_3(q)=\frac{\eta^9(q^3)}{\eta^3(q)},
\end{equation}
which is the multiplicative $\eta$-quotient $\sharp 3$ presented by
Y. Martin in Table I of \cite{yvma}. For more details and references
about this $\eta$-quotient the reader is referred to \cite[A106402]{Oeis}. Finally, note that  if we define $\sum_{k=0}^\infty
a_kq^k:={t}_2(q)=\frac{1}{8}(E_2(q^2)-9E_2(q^6)),$ then one can see
that $3\mid a_k$ for integers $k\geq 1$.

\subsection{The case $n=2$}
From \eqref{eq modvec 2} we get
\[
Sing(\Ra_2)=\{(t_1,t_2,t_3,t_4)\, |\,
t_4=t_3-t_1t_2=t_1^2+16c_2t_2^2=0\},
\]
hence we find
\begin{align}
&p_0=(t_{1,0},\frac{1}{4}\,k_0t_{1,0},\frac{1}{4}\,k_0t_{1,0}^2,0)\in
Sing(\Ra_2),\nonumber
\end{align}
where $k_0=\frac{1}{\sqrt{-c_2}}$. Comparing  the
coefficients of $q^1$, we get $a=-t_{1,0}k_0$ and
\begin{align}
&p_1=(-\frac{6}{5}\frac{t_{3,1}}{k_0t_{1,0}},\frac{1}{10}\frac{t_{3,1}}{t_{1,0}},t_{3,1},-\frac{64}{5}\frac{t_{1,0}^2t_{3,1}}{k_0}),\nonumber
\end{align}
where the equality
$t_{4,1}=-\frac{64}{5}\frac{t_{1,0}^2t_{3,1}}{k_0}$ follows from
\eqref{eq t3}. We set  $c_2=-\frac{1}{64}$, $t_{1,0}=
\frac{1}{40}$ and $t_{3,1}=-1$ and  find the solution given in
\eqref{eq solution R02} for $\Ra_2$.
{\tiny
\begin{table}
\centering \small
\begin{tabular} {|c|c|c|c||c|c|c|c|}
  \hline
   $\Ra_1$ & $t_1$ & $t_2$ & $t_3$ & $\Ra_2$ & $\frac{10}{6}{t}_1(\frac{q}{10})$ & $\frac{10}{4}{t}_2(\frac{q}{10})$ & $10^4{t}_4(\frac{q}{10})$ \\ \hline
  $q^0$ & 1/3 & -1 & 0 & $q^0$ & 1/24 & 1/8 & 0 \\
  $q^1$ & 2 & -3 & 1 & $q^1$ & 1 & -1 & 1 \\
  $q^2$ & 0 & -9 & 3 & $q^2$ & 1 & -5 & -8 \\
  $q^3$ & 2 & 15 & 9 & $q^3$ & 4 & -4 & 12 \\
  $q^4$ & 2 & -21 & 13 & $q^4$ & 1 & -13 & 64 \\
  $q^5$ & 0 & -18 & 24 & $q^5$ & 6 & -6 & -210 \\
  $q^6$ & 0 & 45 & 27 & $q^6$ & 4 & -20 & -96  \\
  $q^7$ & 4 & -24 & 50 & $q^7$ & 8 & -8 & 1016  \\
  $q^8$ & 0 & -45 & 51 & $q^8$ & 1 & -29 & -512 \\
  $q^9$ & 2 & 69 & 81 & $q^9$ & 13 & -13 & -2043 \\
  $q^{10}$ & 0 & -54 & 72 & $q^{10}$ & 6 & -30 & 1680 \\
  $q^{11}$ & 0 & -36 & 120 & $q^{11}$ & 12 & -12 & 1092 \\
  $q^{12}$ & 2 & 105 & 117 & $q^{12}$ & 4 & -52 & 768 \\
  $q^{13}$ & 4 & -42 & 170 & $q^{13}$ & 14 & -14 & 1382 \\
  $q^{14}$ & 0 & -72 & 150 & $q^{14}$ & 8 & -40 & -8128 \\
  $q^{15}$ & 0 & 90 & 216 & $q^{15}$ & 24 & -24 & -2520  \\
      \hline
\end{tabular}
\caption{Coefficients of $q^k,\,\, 0\leq k \leq 15,$ in the
q-expansion of the solutions of $\Ra_1$ and $\Ra_2$.} \label{table1}
\end{table}}

The sum of positive odd divisors of a positive integer $k$, which is
also known as \emph{odd divisor function}, was introduced by
Glaisher \cite{glaisher} in 1906. Let $k$ be a positive integer. We
denote the sum of divisors, the sum of odd divisors and the sum of
even divisors of $k$, by $\sigma(k)$, $\sigma^o(k)$ and
$\sigma^e(k)$, respectively, i.e.,
\[
\sigma(k)=\sum_{d\mid k}d\,\,\,\,\ \&\,\,\,\,\
\sigma^o(k)=\sum_{\mathop {d\mid k}\limits_{\textrm{d is odd}}}d
\,\,\,\,\ \&\,\,\,\,\ \sigma^e(k)=\sum_{\mathop {d\mid
k}\limits_{\textrm{d is even}}}d\, .
\]
We have $\sigma(k)=\sigma^o (k)+\sigma^e(k)$ and $\sigma^o(k)=\sigma(k)-2\sigma(k/2)$,
where $\sigma(k/2):=0$ if $k$ is an odd integer. It follows from \eqref{eq
solution R02} that $t_1$ is the generating function of the odd divisor function:
\[
\frac{10}{6}{t}_1(\frac{q}{10})=\sum_{k=0}^\infty\sigma^o(k)q^k=\frac{1}{24}(\theta_3^4(q^2)+\theta_2^4(q^2)),
\]
where by definition $\sigma^o(0)=1/24$.  For more details about the odd divisor function see
\cite[A000593]{Oeis}.

Comparing the coefficients of $t_2$ presented in Table
\ref{table1} with the integers sequence given in the
\cite[A215947]{Oeis} we find that
\[
\sum _{k=0}^\infty
(\sigma^o(2k)-\sigma^e(2k))q^k=\frac{10}{4}{t}_2(\frac{q}{10})=\frac{1}{24}(E_2(q^2)+2E_2(q^4))
\]
where we define $\sigma^o(0)-\sigma^e(0):=1/8$. 


Another nice observation is about
$10^4{t}_4(\frac{q}{10})=\eta^8(q)\eta^8(q^2)$. The same as $t_3$ in
the case of elliptic curve, see (\ref{eq t3ec}), we see that $t_4$
is the $\eta$-quotient $\sharp 2$ classified by Y. Martin in Table I
of \cite{yvma}, see also \cite[A002288]{Oeis} and the references therein. It is worth to point out that this
$\eta$-quotient appears in the work of Heekyoung Hahn \cite{heeha}.
She proved that $3\mid \mu_{3k}$, $k=0,1,2,\ldots$, where $\mu_k$ is
defined as follow
\[
\sum_{k=0}^\infty \mu_kq^k:=\eta^8(q)\eta^8(q^2).
\]
She also found some partition congruences by using the notion of
colored partitions, for more details see \cite[~ \S 6]{heeha}.

\subsection{The case $n=4$}

{\tiny
\begin{table}[b]
\centering \tiny
\begin{tabular}{|r|c|c|c|c|c|c|c|}
         \hline
         $\Ra_4$ & $q^0$ & $q^1$ & $q^2$ & $q^3$ & $q^4$ & $q^5$ & $q^6$ \\
         \hline
         $\frac{1}{20}t_1$ & $\frac{1}{720}$ & 1 & 4131 & 51734044 & 918902851011 & 19562918469120126 & 465569724397794578388 \\
         \hline
         $\frac{1}{216}t_2$ & $-\frac{1}{216}$ & 9 & 110703 & 2248267748 & 55181044614231 & 1498877559908208054 & 43378802521495632926652 \\
         \hline
         $\frac{1}{14}t_3$ & $-\frac{1}{504}$ & 11 & 115137 & 2265573692 & 54820079452449 & 1477052190387154386 & 42523861222488896739828 \\
         \hline
         $\frac{1}{24}t_4$ & $-\frac{1}{144}$ & 16 & 193131 & 3904146832 & 95619949713765 & 2594164605185043648 & 75018247757143686903060 \\
         \hline
         $\frac{1}{2}t_5$ & $-\frac{1}{144}$ & 45 & 469872 & 9215455916 & 222628516313454 & 5992746995783064438 & 172421735348939185816992 \\
         \hline
         $-6^6t_6$ & 0 & -1 & 1944 & 10066356 & 139857401664 & 2615615263199250 & 57453864811412558112 \\
         \hline
         $-\frac{1}{2}t_7$ & $-\frac{1}{72}$ & 7 & 32859 & 414746092 & 7395891627375 & 157811370338782458 & 3761184845284146266940 \\
         \hline
         $\frac{18}{7}t_8$ & $-\frac{1}{3024}$ & 7 & 54855 & 1034706148 & 24546181658391 & 653902684588247058 & 18687787944102314534628 \\
         \hline
         \end{tabular}
\caption{Coefficients of $q^k,\,\, 0\leq k \leq 6,$ in the
q-expansion of a solution of $\Ra_4$.} \label{table2}
\end{table}
}
The set of the singularities of $\Ra_4$ contains the set of
$(t_1,t_2,\ldots,t_8)$'s that satisfy
\begin{equation}\label{eq sing R4}
t_6=t_3-t_1t_2=6^4c_4t_4^2-t_1^2=t_8-6^4c_4t_4^3=t_5-3t_1t_4=-t_4^2-t_2t_7=0.
\end{equation}
Hence if we fix $t_{1,0}$ and $t_{2,0}$, then from \eqref{eq sing
R4} we get
\[
p_0=(t_{1,0},t_{2,0},t_{1,0}t_{2,0},-\frac{1}{36k_0}t_{1,0},-\frac{1}{12k_0}t_{1,0}^2,0,-\frac{1}{1296c_4}\frac{t_{1,0}^2}{t_{2,0}},-\frac{1}{36k_0}t_{1,0}^3),
\]
where $c_4=k_0^2$. By comparing coefficients of $q^1$ we find
\[
a=-6t_{2,0},
\]
and
\[
p_1=(\frac{60k_0t_{8,1}}{49t_{1,0}^2},\frac{-162k_0t_{2,0}t_{8,1}}{49t_{1,0}^3},\frac{-66k_0t_{2,0}t_{8,1}}{7t_{1,0}^2},\frac{16t_{8,1}}{147t_{1,0}^2},
\frac{45t_{8,1}}{49t_{1,0}},\frac{3888k_0t_{1,0}^3t_{8,1}}{49},\frac{t_{8,1}}{1512k_0t_{1,0}t_{2,0}},t_{8,1}).
\]
After fixing $k_0=6^{-3}$, $t_{1,0}=\frac{1}{36}$, $t_{2,0}=-1$ and
$t_{8,1}=\frac{49}{18}$ we find the $q$-expansion of a solution of
$\Ra_4$. We list the first seven coefficients of $q^k$'s in Table
\ref{table2}. As it was expected, after multiplying $t_j$'s by a
constant, all the coefficients are integers. If we compute the $q$-expansion of $\Yuk_1^2$ given in \eqref{eq
yukawa 4}, then we find
{\small
\begin{align}
\frac{1}{6}\Yuk_1^2=6&+120960\,q+4136832000\,q^2+148146924602880\,q^3+5420219848911544320\,q^4\nonumber\\
&+200623934537137119778560\,q^5+7478994517395643259712737280\,q^6\nonumber\\
&+280135301818357004749298146851840\,q^7+10528167289356385699173014219946393600\,q^8\nonumber\\
&+396658819202496234945300681212382224722560\,q^9\nonumber\\
&+14972930462574202465673643937107499992165427200\,q^{10}+\ldots
\nonumber
\end{align}}
which is the $4$-point function discussed in \cite[Table~1,
$d=4$]{GMPR}.  We have also computed the $q$-expansion of the modular coordinate $z$
\begin{equation}
\frac{z}{6^6}=\frac{t_6}{(6t_1)^6}=q
-6264q^2
-8627796q^3
-237290958144q^4
-4523787606611250q^5
+\cdots
\end{equation}
which coincides with the one computed in \cite[\S 6.1]{KlemmPandharipande2008}.
The computation of genus $1$ topological string partition function ${\cal F}_1$ in \cite[\S 6.1 ]{KlemmPandharipande2008} may offer further evidences  that our computation of $\Ra_4$ is correct.
\href{http://w3.impa.br/~hossein/WikiHossein/files/Singular%20Codes/2016-03-GMCD-DF-SingularCode.txt}
{For the computer codes used in this article see the first author's
webpage} or the buttom of the tex file of the present article  in {\tt arxiv.org}.

\newcommand{\etalchar}[1]{$^{#1}$}
\def\cprime{$'$} \def\cprime{$'$} \def\cprime{$'$} \def\cprime{$'$}




\end{document}